\newtheorem{THM}{Theorem}
\newtheorem{thm}{Theorem}
\numberwithin{thm}{section}
\newtheorem{prop}[thm]{Proposition}
\newtheorem{lem}[thm]{Lemma}
\newtheorem{cor}[thm]{Corollary}
\theoremstyle{definition}
\newtheorem{defn}[thm]{Definition}
\DeclareMathOperator{\out}{Out}
\DeclareMathOperator{\aut}{Aut}
\DeclareMathOperator{\im}{Im}
\DeclareMathOperator{\st}{st}
\DeclareMathOperator{\ad}{ad}
\DeclareMathOperator{\Star}{Star}
\DeclareMathOperator{\Link}{Link}
\DeclareMathOperator{\Stab}{Stab}
\DeclareMathOperator{\Mod}{Mod}
\newcommand{\doublebackslash}{\backslash\mkern-5mu\backslash}
\newcommand{%
	    \def\svgwidth{\columnwidth}
	        \import{./figures/}{.pdf_tex}
	}[1]{%
	    \def\svgwidth{\columnwidth}
	        \import{./figures/}{#1.pdf_tex}
	}
\begin{document}
\title{When is the Outer Space of a free product CAT(0)?}
\author{Robert Alonzo Lyman}
\maketitle
\begin{abstract}
  Generalizing Culler and Vogtmann's Outer Space for the free group,
  Guirardel and Levitt construct an Outer Space for a free product of groups.
  We completely characterize when this space (or really its simplicial spine)
  supports an equivariant piecewise-Euclidean or piecewise-hyperbolic CAT(0) metric.
  Our results are mostly negative, extending thesis work of Bridson
  and related to thesis work of Cunningham.
  In particular, provided the dimension of the spine is at least three,
  it is never CAT(0).
  Surprisingly, we exhibit one family of free products for which
  the Outer Space is two-dimensional and \emph{does} support an equivariant CAT(0) metric.
\end{abstract}
\section{Introduction}
This paper grows out of an observation which surprised the author.
We state it as a theorem, and explain why it should be surprising.

\begin{THM}\label{mainpositiveresult} For any nontrivial finite groups $A$ and $B$,
  let $G = A*B*\mathbb{Z}$.
  The group $\out(G)$ of outer automorphisms of $G$ satisfies the following properties.
  \begin{enumerate}
  \item $\out(G)$ acts geometrically on a piecewise-Euclidean CAT$(0)$ polyhedral complex
    of dimension two.
  \item $\out(G)$ contains $\mathbb{Z} \oplus \mathbb{Z}$ and is thus not hyperbolic.
  \item $\out(G)$ has infinitely many ends and is thus relatively hyperbolic.
  \item $\out(G)$ is not a virtual duality group.
  \end{enumerate}
\end{THM}

\begin{proof}
  In actual fact, the positive results of this paper are devoted primarily to the first point
  in \Cref{mainpositiveresult}.
  We prove the remaining statements accepting this one as given for now.
  The fact that the complex so constructed has infinitely many ends,
  and that groups with infinitely many ends are hyperbolic relative to a collection of subgroups
  each with at most one end establishes the third item.
  For a group of virtual cohomological dimension two like $\out(G)$,
  being a virtual duality group is equivalent to one-endedness, establishing the fourth item.
  (When the dimension is more generally $n$, one wants $(n-2)$-connectivity at infinity.)
  Finally,
  If $w$ is an infinite-order element in $A*B$ and $t$ generates a $\mathbb{Z}$ \emph{cofactor}
  of $A*B$ in $A*B*\mathbb{Z}$, the outer classes of the automorphisms $\lambda$ and $\rho$
  of $A*B*\mathbb{Z}$ defined by left- or right-multiplying $t$ by $w$
  and leaving $A*B$ fixed
  are distinct, commuting and have infinite order,
  generating the requisite $\mathbb{Z}\oplus \mathbb{Z}$ subgroup.
\end{proof}

Let $F_n$ denote a free group of rank $n$.
Observe that the group $\out(F_n)$ is finite when $n \le 1$.
The group $\out(F_2)$ is virtually free (i.e.\ has a free subgroup of finite index)
and hence is hyperbolic, has infinitely many ends, acts geometrically on a tree,
does not contain $\mathbb{Z} \oplus \mathbb{Z}$ and is a virtual duality group of dimension one.
However, when $n \ge 3$, we have the following results, in contrast to \Cref{mainpositiveresult}.
\begin{enumerate}
\item The group $\out(F_n)$ is not a CAT$(0)$ group~\cite{Gersten,BridsonVogtmann}.
\item The group $\out(F_n)$ contains $\mathbb{Z} \oplus \mathbb{Z}$ and is in fact
  \emph{thick} in the sense of Behrstock--Dru\c{t}u--Mosher~\cite{BehrstockDrutuMosher}.
  It is therefore neither hyperbolic nor relatively hyperbolic.
\item The group $\out(F_n)$ is one ended~\cite{Vogtmann}.
\item The group $\out(F_n)$ is a virtual duality group of dimension $2n - 3$~\cite{BestvinaFeighn}.
\end{enumerate}

We stumbled upon \Cref{mainpositiveresult}
while attempting to prove that if $G$ is a free product of finite and cyclic groups,
then the only situation in which $\out(G)$ fails to be one-ended
is because it is acts geometrically on a tree (i.e.\ is virtually free).
This project is~\cite{MyOneEnded};
therein we prove that among free products of finite and cyclic groups,
the groups $A * B * \mathbb{Z}$ are the only exception to this rule.

We prove \Cref{mainpositiveresult} by considering a certain complex $L(G)$
on which $\out(G)$ acts properly discontinuously and cocompactly;
it is analogous to the \emph{reduced spine of Outer Space} considered by Culler and Vogtmann
in~\cite{CullerVogtmann}.
We show that $L(G)$ supports an $\out(G)$-equivariant CAT$(0)$ metric
(and a more convenient cellulation)
and exhibit a compact subset of $L$ that separates it into more than one component
with noncompact closure.
(This shows that $L$ and hence $\out(G)$ has more than one end;
to conclude that there are infinitely many and not just two,
observe that two-ended groups do not contain $\mathbb{Z}\oplus\mathbb{Z}$.)

Even compared with results about free products of finite groups,
\Cref{mainpositiveresult} is a surprise.
\begin{enumerate}
\item If $G = A_1 * \cdots * A_n$ is a free product of finite groups with $n \ge 4$,
  Das in his thesis showed that the group $\out(G)$ is thick~\cite{Das}.
\item If $G = C_2 * \cdots * C_2$, where the number of factors is at least $4$,
  Cunningham in his thesis~\cite{Cunningham} showed that another simplicial complex
  defined by McCullough and Miller~\cite{McCulloughMiller} does not support
  an $\out(G)$-equivariant piecewise Euclidean or piecewise-hyperbolic CAT$(0)$ metric.
  This mirrors a result of Bridson's thesis~\cite{Bridson} for $\out(F_n)$ when $n \ge 3$.
\end{enumerate}

In fact, our main result is negative.

\begin{THM}\label{mainnegativeresult}
  Suppose that $T$ is a free splitting of a group $G$,
  whose associated \emph{free factor system} $\mathscr{A}$ decomposes
  $G$ as a free product of groups of the form $G = A_1 * \cdots * A_n * F_k$,
  where the $A_i$ are nontrivial groups and $F_k$ is free of rank $k$.
  The complex $L(T) = L(G,\mathscr{A})$ supports an $\out(G,\mathscr{A})$-equivariant
  piecewise-Euclidean or piecewise-hyperbolic
  CAT$(0)$ metric precisely when
  \begin{enumerate}
  \item The complex $L(T)$ has dimension at most one.
    This happens only for decompositions of the form $G = A$, $A_1 * A_2$,
    $A_1 * A_2 * A_3$, $A_1 * \mathbb{Z}$, $G = \mathbb{Z}$ and $G = F_2$.
  \item The decomposition has the form $G = A_1 * A_2 * \mathbb{Z}$.
  \end{enumerate}
\end{THM}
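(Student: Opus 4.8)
The plan is to prove the equivalence in both directions; nearly all the work is in the ``only if'' direction. Two facts are used throughout. First, $L(T)$ is contractible, as for every spine of an Outer Space. Second, $\out(G,\mathscr{A})$ acts on $L(T)$ properly discontinuously and cocompactly, so under any equivariant piecewise-Euclidean or piecewise-hyperbolic metric there are only finitely many isometry types of cells; such a metric space is complete and geodesic, and it is CAT$(0)$ if and only if it is simply connected---automatic, since $L(T)$ is contractible---and locally CAT$(0)$, equivalently (Gromov's link condition) the piecewise-spherical link of each cell is CAT$(1)$. A preliminary step is to record a combinatorial formula for $\dim L(T)$, obtained by comparing a graph of groups in the deformation space with maximally many edges against one with minimally many: $\dim L(T) = n + 2k - 2$ for $n \ge 1$ (with obvious exceptions for the very smallest cases) and $\dim L(T) = 2k-3$ for $n = 0$. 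From this, item~(1) is a finite check, and one sees that the only decompositions with $\dim L(T) = 2$ are $A_1 * A_2 * \mathbb{Z}$ and $A_1 * A_2 * A_3 * A_4$, while every other decomposition not in the list of item~(1) has $\dim L(T) \ge 3$.

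For the ``if'' direction: when $\dim L(T) \le 1$ the complex is a contractible graph, hence a tree, and giving every edge length one yields an equivariant CAT$(0)$ metric. When the decomposition is $A_1 * A_2 * \mathbb{Z}$, the desired metric is exactly the content of item~(3) of \Cref{mainpositiveresult}: one equips the two-dimensional complex $L(T)$ (in the convenient cellulation) with an explicit piecewise-Euclidean structure and checks by hand that the link of each vertex type is a CAT$(1)$ metric graph (systole $\ge 2\pi$), with contractibility supplying the global conclusion.

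For the ``only if'' direction, suppose $\dim L(T) \ge 2$ and the decomposition is not $A_1 * A_2 * \mathbb{Z}$, and suppose for contradiction that $L(T)$ carries an equivariant piecewise-Euclidean or piecewise-hyperbolic CAT$(0)$ metric $d$; the case $G = F_k$ with $k \ge 3$ is Bridson's theorem~\cite{Bridson}, so we may take $n \ge 1$. The strategy has two stages. First, reduce to finitely many minimal decompositions---centrally $A_1 * A_2 * A_3 * A_4$ and $A_1 * \mathbb{Z} * \mathbb{Z}$---by an inheritance argument: for a suitably refined or restricted factor system $\mathscr{A}'$, the complex $L(G,\mathscr{A}')$ sits inside $L(T)$ as a subcomplex which is the fixed-point set of a finite subgroup of $\out(G,\mathscr{A})$ and is therefore metrically convex for $d$, since fixed-point sets of isometries of a CAT$(0)$ space are convex; hence $d$ restricts to an equivariant CAT$(0)$ metric on the smaller, still cocompact, complex. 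Second, for each minimal decomposition, produce a cell $\sigma$ of $L(T)$---roughly, a maximally collapsed graph of groups, or a low-dimensional iterated link of one---whose stabilizer $\Stab(\sigma)$ acts on $\Link(\sigma)$ with enough symmetry to force, under \emph{any} equivariant piecewise-spherical metric, an embedded closed local geodesic of length strictly less than $2\pi$; this contradicts that $\Link(\sigma)$ must be CAT$(1)$. For this one computes $\Link(\sigma)$ as an explicit small complex, pins down enough of $\Stab(\sigma)$, and, when $\dim \Link(\sigma) > 1$, iterates the passage to links (a link inside a link being the link of a higher-dimensional cell) to bootstrap from the two-dimensional case, in the spirit of the theses of Bridson and Cunningham~\cite{Bridson,Cunningham}.

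The main obstacle is this ``only if'' direction, and within it two issues stand out. The reduction is not automatic: CAT$(0)$-ness does not descend to arbitrary subcomplexes, so the smaller complexes must be realized \emph{precisely} as fixed-point sets (or otherwise as subcomplexes convex for every equivariant metric), and this identification must be carried out combinatorially. More seriously, the link analysis in the minimal cases cannot be soft. The group $\out(A_1 * A_2 * \mathbb{Z})$ contains $\mathbb{Z}\oplus\mathbb{Z}$ and yet $L(A_1 * A_2 * \mathbb{Z})$ is a two-dimensional CAT$(0)$ complex, and $F_2 \times F_2$ shows that even thick groups can act geometrically on two-dimensional CAT$(0)$ complexes; so neither the presence of $\mathbb{Z}\oplus\mathbb{Z}$ nor thickness (hence neither the results cited about $\out(F_n)$ nor those of Das) can be the source of the obstruction. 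The obstruction must instead be extracted from the fine combinatorics of the specific links---how many cells meet $\sigma$, how they are arranged, and exactly which symmetries lie in $\Stab(\sigma)$---and locating the forced short loop there is where the bulk of the case analysis lives.
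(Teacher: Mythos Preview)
Your ``if'' direction matches the paper. The gap is in your reduction step for the ``only if'' direction. The claim that $L(G,\mathscr{A}')$ sits inside $L(G,\mathscr{A})$ as the fixed-point set of a finite subgroup of $\out(G,\mathscr{A})$ is unsubstantiated and, as stated, does not work. Fixed-point subcomplexes in the Krsti\'c--Vogtmann sense run the other way (from Outer Space of a free group to spines for virtually free groups), not from a free product with more factors to one with fewer; there is no evident finite subgroup of $\out(A_1*\cdots*A_5,\mathscr{A})$ whose fixed set is a copy of $L(A_1*\cdots*A_4)$, and the case $n=3$, $k=1$ does not visibly reduce to either of your ``minimal'' cases $A_1*A_2*A_3*A_4$ or $A_1*\mathbb{Z}*\mathbb{Z}$. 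Without this reduction, your argument does not reach the base cases, and the subsequent link analysis (which you correctly identify as the heart of the matter) never gets off the ground.

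The paper avoids any reduction. It works directly in the full complex $L(G,\mathscr{A})$ and exhibits a single $2$-simplex $\sigma$ two of whose angles must be at least $\tfrac{\pi}{2}$ under any equivariant piecewise-Euclidean or piecewise-hyperbolic metric; since a Euclidean or hyperbolic triangle has angle sum at most $\pi$ and the third angle is positive, this is already a contradiction. There are three template $2$-simplices, one each for the regimes $k=0$ (so $n\ge 4$), $k=1$ (so $n\ge 3$), and $k\ge 2$ with $n\ge 1$; each template carries an ``ellipsis'' vertex at which one attaches an arbitrary reduced marked graph of groups to realize every larger $(n,k)$ simultaneously. The angle-forcing is local: at two vertices of $\sigma$, the relevant edge of the link lies on a $4$-cycle all of whose edges are identified by the $\out(G,\mathscr{A})$-action (because the nontrivial vertex group at the blow-up point produces multiple equivalent choices of ideal edge), so each such angle is at least $\tfrac{\pi}{2}$. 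This needs no iterated links, no bootstrapping from the two-dimensional case, and no convex subcomplexes---just one bad triangle per regime.
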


In other words, for all but finitely many combinatorial possibilities in terms of $n$ and $k$,
the complex $L(T)$ does not support a piecewise-Euclidean or piecewise-hyperbolic
$\out(G,\mathscr{A})$-equivariant CAT$(0)$ metric.
(Actually, the proof also rules out equivariant CAT$(0)$ metrics
which fail to be piecewise-Euclidean or piecewise-hyperbolic
only in that they have a mix of Euclidean and hyperbolic simplices.)

We prove \Cref{mainnegativeresult} by investigating the combinatorial structure of links in $L$.
The definition of the complex $L$ makes sense for an arbitrary \emph{deformation space of trees}
in the sense of Forester and Guirardel--Levitt~\cite{Forester,GuirardelLevitt},
and was first studied in this generality by Clay~\cite{Clay}, who called it $W$.
(Our choice of $L$ follows Culler and Vogtmann.)
Our arguments and perspective on $L$ have the potential to be useful beyond this paper
and are somewhat idiosyncratic:
we work in the quotient graph of groups rather than the Bass--Serre tree.
To do this, we require an innovation of~\cite{MyTrainTracks},
namely the idea of a \emph{map} or a \emph{homotopy equivalence} of graphs of groups,
an idea that builds off of an earlier definition of Bass of a \emph{morphism} of graphs of groups.

Here is the organization of the paper.
\Cref{preliminariessection} contains an exposition of our perspective on the complex $L$,
culminating in the beginning of an understanding of links in $L$
in the particular case when edge groups of some (and in fact any) graph of groups
representing a vertex of $L$ are finite.
We prove \Cref{mainpositiveresult} in \Cref{mainpositivesection},
beginning with the instructive and simplest example
of $C_2 * C_2 * \mathbb{Z}$,
which turns out to contain a certain (non-right-angled) Coxeter group as a subgroup of index four.
The remainder of the paper, \Cref{mainnegativesection},
is given to the proof of \Cref{mainnegativeresult}.

\section{The complex}\label{preliminariessection}
The purpose of this first section is to introduce the complex $L$
that we will study in what follows.
The definition of $L$ makes sense for an arbitrary cocompact tree action $\rho\colon G \to \aut(T)$.
There is \emph{a priori} a space $L = L(\rho)$ for each action $\rho$,
and we will understand precisely when $\rho$ and $\rho'\colon G \to \aut(T')$
determine the same complex $L$.

For the expert reader, here is a quick definition:
suppose $\rho\colon G \to \aut(T)$ is a cocompact tree action.
We may as well suppose $\rho$ is minimal.
A vertex of $L(\rho)$ is the $G$-equivariant isomorphism class of a minimal, cocompact
action of $G$ on a simplicial tree $T'$
such that there exist $G$-equivariant continuous (not necessarily simplicial) maps $T \to T'$ and $T' \to T$.
It follows that a subgroup of $G$ is elliptic (fixes a point) in $T$ if and only if it fixes a point in $T'$.
We make the further stipulation that every edge of $T$ is \emph{surviving}
in the sense that there is some \emph{reduced} (i.e.\ collapse-minimal) tree in which the edge is not collapsed
(we say the edge ``survives'').
Edges of the complex $L(\rho)$ correspond to collapsing certain orbits of edges in one of the trees
to obtain a tree in the equivalence class of the other.

When $\rho\colon G \to \aut(T)$ is a \emph{free splitting}---that is, edge stabilizers are trivial,
then every vertex of $L(\rho)$ is a free splitting with the same vertex stabilizers.
An edge of $T$ is surviving just when it is either nonseparating in the quotient graph of groups
or when, removing the open edge,
each component of the complement contains a vertex with nontrivial vertex group.
Thus when $G$ is a free group and $\rho\colon G \to \aut(T)$ is a free action,
the complex $L(\rho)$ is the spine of reduced Outer Space for the free group as studied
by Culler and Vogtmann~\cite{CullerVogtmann}.

When $G$ is additionally virtually free and all stabilizers under $\rho$ are finite---that is,
$\rho$ is a splitting of the form $G = A_1 * \cdots A_k * F_n$, where the $A_i$ are finite groups
and $F_n$ is free of some finite rank $n$,
the complex $L$ was studied by Krsti\'c and Vogtmann under another guise~\cite{KrsticVogtmann}
as a fixed-point subcomplex in some Culler--Vogtmann Outer Space.
When the rank of the free group is zero, the space $L$ was studied by McCullough and Miller~\cite{McCulloughMiller}
although is not the space commonly referred to as McCullough--Miller space.

The spine $L(\rho)$ for an arbitrary tree action $\rho\colon G \to \aut(T)$
was first studied by Clay~\cite{Clay}, who called it $W$.
He shows that it is a deformation retract of the deformation space of $\rho$
when the latter is equipped with the weak topology.
Guirardel and Levitt~\cite{GuirardelLevittDeformation}
prove that deformation spaces of trees are always contractible in the weak topology,
so $L$ is a contractible simplicial complex.

Our purposes require a detailed understanding of links of vertices of $L$,
since ultimately we appeal to a success or failure of Gromov's \emph{link condition}
to show whether or not $L$ can be equipped with a CAT$(0)$ metric.
For these purposes, we find it simpler and more illuminating to work in the quotient graph of groups
rather than in the tree $T$.
Part of the length of this section is to put in writing our perspective,
which is the ``graph of groups'' version of Culler and Vogtmann's
original definition of Outer Space as a space of marked (metric) graphs.
Such a definition requires one to make precise what one means by a ``homotopy equivalence''
of a graph of groups, a definition introduced by the author in~\cite{MyTrainTracks}.
We give a full and careful account for completeness,
although the concept of a homotopy equivalence plays a largely ancillary role in this paper.

The main advantage of working in the graph of groups is perhaps subjective:
pictures can be drawn,
and constructions of maps and graphs of groups feel easier;
one is working with concrete and finitary objects rather than infinite trees
equipped with a perhaps-mysterious group action.
Additionally, because the criterion for a tree to be contained in $L$
is a graph-of-groups condition,
it feels easier to make a thorough-going account of this perspective.

Our main result in this section is \Cref{idealedgeproposition},
which describes links of vertices in $L$
for the canonical deformation space of a virtually free group or a free splitting.

\subsection{Graphs of groups and maps thereof}
In this subsection,
we introduce graphs of groups and maps thereof.
We follow notation set in~\cite{MyTrainTracks}.
For additional background on graphs of groups,
the reader is referred to~\cite{Bass, Trees, ScottWall}.

Suppose that a group $G$ acts on a simplicial tree $T$ by simplicial automorphisms.
For each oriented edge $e$ and element $g \in G$,
we have that if $g$ preserves $e$ setwise,
it either preserves it pointwise
or it \emph{inverts} $e$, sending $e$ to $\bar e$,
the edge $e$ with its opposite orientation.
We say that $e$ is \emph{inverted by the $G$-action}
if there exists some $g \in G$ inverting $e$.
The property of being inverted by the $G$-action
is shared by all edges in a single edge orbit.
Therefore by introducing a new vertex at the midpoint of each edge
which is inverted by the $G$-action,
we obtain a new simplicial structure on $T$
which is still respected by the original action of $G$
and which is additionally~\emph{without inversions in edges.}

In this situation---$G$ acting without inversions on a simplicial tree $T$---there is
a construction of the \emph{quotient graph of groups}
which we will make extensive use of.

\paragraph{Graphs of groups.}
But first, what is a graph of groups?
Begin with a graph $|\mathcal{G}|$; that is, a CW-complex of dimension one.
Typically one assumes that $|\mathcal{G}|$ is connected.
For each vertex $v$ and edge $e$,
assign groups $\mathcal{G}_v$ and $\mathcal{G}_e$.
Furthermore, if an oriented edge $e$ of $|\mathcal{G}|$
has initial vertex $v$ and terminal vertex $w$,
we stipulate as part of the data
injective homomorphisms $\iota_e\colon \mathcal{G}_e \to \mathcal{G}_v$
and $\iota_{\bar e}\colon \mathcal{G}_e \to \mathcal{G}_w$.

We return to the situation of a group $G$ acting on a simplicial tree $T$
without inversions in edges.
The quotient $G \backslash T$ naturally inherits the structure of a graph from $T$
because the action is without inversions.

\begin{defn}[Quotient graph of groups]
  The \emph{quotient graph of groups} $G\doublebackslash T$
  depends on several choices.
  The underlying graph of the graph of groups will be $|\mathcal{G}| = G \backslash T$.
  Choose a closed, connected fundamental domain $S$ for the action of $G$ on $T$
  and within it a connected subtree $S_0$ which projects
  homeomorphically under the quotient map $\pi\colon T \to G \backslash T$
  to a spanning tree for the graph $G \backslash T$.
  Thus $S$ contains precisely one preimage $\tilde{e}$ for each edge $e$ of $|\mathcal{G}|$
  and $S_0$ contains precisely one preimage $\tilde{v}$ for each vertex $v$ of $|\mathcal{G}|$.

  The groups $\mathcal{G}_v$ and $\mathcal{G}_e$ for each vertex $v$ and edge $e$
  of $|\mathcal{G}|$ are the stabilizers of $\tilde{v}$ and $\tilde{e}$
  under the action of $G$, respectively.
  When the initial vertex of $\tilde{e}$ is in $S_0$,
  say it is $\tilde{v}$,
  then because the action of $G$ is without inversions in edges,
  we have an inclusion of stabilizers $\Stab(\tilde{e}) \le \Stab(\tilde{v})$;
  this inclusion is the required homomorphism
  $\iota_e\colon \mathcal{G}_e \to \mathcal{G}_v$.

  In the contrary case, when the initial vertex of $\tilde{e}$ is not in $S_0$,
  it is by definition equal to $g^{-1}.\tilde{v}$ for some vertex $\tilde{v} \in S_0$
  and some choice of element $g \in G$.
  If $h \in G$ stabilizes $\tilde{e}$,
  the element $ghg^{-1}$ stablizes $\tilde{v}$,
  so we let the injective homomorphism $\iota_e \colon \mathcal{G}_e \to \mathcal{G}_v$
  be the restriction of the map $h \mapsto ghg^{-1}$ to $\Stab(\tilde{e})$.
\end{defn}

The fundamental theorem of Bass--Serre theory
asserts that \emph{every} connected graph of groups is a quotient graph of groups.
Let us explain.

\paragraph{The fundamental group.}
Given a connected graph of groups $\mathcal{G}$,
we construct a $K(G,1)$ complex $X_{\mathcal{G}}$ for a group $G$
and a map $X_{\mathcal{G}} \to |\mathcal{G}|$ which we will think of as a retraction.
In this way we obtain a group $G$, namely the fundamental group of $X_{\mathcal{G}}$,
and a graph (really a tree) $T$, which is obtained from the universal cover
$\tilde{X}_{\mathcal{G}}$ of $X_{\mathcal{G}}$ by collapsing components of the fibers of the map
$\tilde{X}_{\mathcal{G}} \to X_{\mathcal{G}} \to |\mathcal{G}|$.
The construction of the tree $T$ will play only a background role in this paper.
We refer the reader to~\cite{MyTrainTracks, Bass, Trees, ScottWall} for more information.

Explicitly, for each vertex $v$ and edge $e$ of $|\mathcal{G}|$,
choose $K(\mathcal{G}_v,1)$ and $K(\mathcal{G}_e,1)$ complexes
$X_v$ and $X_e$, each with a single vertex.
Because they are $K(\pi,1)$ spaces,
the homomorphisms $\iota_e\colon \mathcal{G}_e \to \mathcal{G}_v$
and $\iota_{\bar e} \colon \mathcal{G}_e \to \mathcal{G}_w$
are realized by continuous, cellular maps
$i_e \colon X_e \to X_v$ and $i_{\bar e}\colon X_e \to X_w$.

The space $X_{\mathcal{G}}$ is the quotient of the disjoint union
\[
  \coprod_v X_v \sqcup \coprod_e \left(X_e \times [0,1]\right)
\]
by attaching the subspaces $X_e \times \{0\}$ and $X_e \times \{1\}$
in the pattern of $|\mathcal{G}|$.
That is, each point $(x,0)$ is identified with $i_e(x)$,
and $(x,1)$ with $i_{\bar e}(x)$.
This is a CW-complex.
It turns out to be a $K(G,1)$ space.

Anyway, since each $X_v$ and $X_e$ have one vertex $\star_v$ and $\star_e$
and the maps $i_e$ and $i_{\bar e}$ as $e$ varies are cellular,
the subspace of $X_{\mathcal{G}}$ comprising those points $\star_v$ and $(\star_e, t)$
for $t \in [0,1]$ is connected and homeomorphic to $|\mathcal{G}|$.
Collapsing each $X_v$ and $X_e \times \{t\}$ to a point
yields a map $X_{\mathcal{G}} \to |\mathcal{G}|$;
we think of it as a retraction.

\begin{defn}
  The \emph{fundamental group of the graph of groups $\mathcal{G}$}
  is the fundamental group of $X_{\mathcal{G}}$.

  Choose a basepoint $\star$ in the image of $|\mathcal{G}|$.
  Indeed, for simplicity, suppose that $\star$ is a vertex.
  By cellular approximation, each loop based at $\star$,
  and more generally each path with endpoints on $|\mathcal{G}|$,
  may be homotoped into the $1$-skeleton of $X_{\mathcal{G}}$,
  and thus (after a further homotopy rel endpoints)
  corresponds to a \emph{graph of groups edge path}
  of the form
  \[
    g_0e_1g_1\ldots e_kg_k,
  \]
  where each $g_i$ is an element of some vertex group $\mathcal{G}_v$,
  (that is, a loop in some vertex \emph{space} $X_v$),
  each $e_i$ is an edge of $|\mathcal{G}|$,
  and the concatenation $e_1 \ldots e_k$ is an edge path
  (that is, the oriented image of some path $[0,1] \to |\mathcal{G}|$) in $|\mathcal{G}|$.

  Homotopy rel endpoints of graph of groups edge paths in $X_{\mathcal{G}}$
  has a combinatorial shadow in $\mathcal{G}$
  without reference to $X_{\mathcal{G}}$:
  it is generated by the following operations.
  \begin{enumerate}
  \item Introduce or remove a subpath of the form $e 1 \bar e$,
    where $1$ denotes the identity element of the relevant vertex group.
  \item Replace a subpath of the form $g\iota_e(h)e g'$
    with one of the form $ge\iota_{\bar e}(h)g'$ or vice versa,
    where $h \in \mathcal{G}_e$ is an edge group element.
  \end{enumerate}
\end{defn}

\paragraph{Maps of graphs of groups.}
The motivation for the definition of a \emph{map} of a graph of groups is twofold:
maps of graphs of groups should reflect (twisted) equivariant maps of Bass--Serre trees,
and they should reflect at least some homotopy classes of maps of the $K(G,1)$
complexes described above,
namely those maps $f_X \colon X_{\mathcal{G}} \to X_{\mathcal{H}}$
which are \emph{fiber-preserving,}
in the sense that they induce a map $|f|\colon |\mathcal{G}| \to |\mathcal{H}|$
making the following diagram commute
\[
  \begin{tikzcd}
    X_{\mathcal{G}} \ar[r, "f_X"] \ar[d] & X_{\mathcal{H}} \ar[d] \\
    {|\mathcal{G}|} \ar[r, "|f|"] & {|\mathcal{H}|}.
  \end{tikzcd}
\]
It is this latter perspective we take up now;
for the trees perspective the reader is referred to~\cite{MyTrainTracks, Bass}.

By cellular approximation,
the map $f_X$ is homotopic to a cellular map,
and after a further homotopy, we may assume that
for each edge $e$ of $|\mathcal{G}|$,
the image $f_X(e)$ corresponds to a graph of groups edge path.
The map $f_X$ is thus homotopic to a map with the following properties reflected
in $\mathcal{G}$ and $\mathcal{H}$.

\begin{defn}[Map of graphs of groups]
  A \emph{map} of graphs of groups $f\colon \mathcal{G} \to \mathcal{H}$
  is the data of the following.
  \begin{enumerate}
  \item A continuous map $|f|\colon |\mathcal{G}| \to |\mathcal{H}|$
    sending vertices to vertices
    and edges to edge paths.
  \item For each vertex $v$ of $|\mathcal{G}|$,
    a homomorphism $f_v\colon \mathcal{G}_v \to \mathcal{H}_{|f|(v)}$.
  \item For each edge $e$, a graph of groups edge path
    \[
      f(e) = g_0 e_1 g_1 \ldots e_n g_n
    \]
    such that the underlying edge path $e_1 \ldots e_n$
    is the image $|f|(e)$.
    We allow graph of groups edge paths to be reduced to a single vertex group element.
  \item For each edge $e$ and each edge $e_i$ appearing in the edge path $|f|(e)$,
    a homomorphism $f_{e,e_i}\colon \mathcal{G}_e \to \mathcal{H}_{e_i}$
    such that the following diagram commutes
    \[
      \begin{tikzcd}[column sep = huge]
        \mathcal{G}_v \ar[d, "f_v"]
        & & \mathcal{G}_e \ar[ll, "\iota_e"'] \ar[rr, "\iota_{\bar e}"]
        \ar[ld, "f_{e,e_i}"'] \ar[rd, "f_{e,e_{i+1}}"]
        & & \mathcal{G}_w \ar[d, "\ad(g_n) f_w"] \\
        \mathcal{H}_{f(v)} \ldots & \mathcal{H}_{e_i} \ar[l, "\ad(g_{i-1})\iota_{e_i}"']
        \ar[r, "\iota_{\bar e_i}"] & \mathcal{H}_{v_i} & \mathcal{H}_{e_{i+1}}
        \ar[l, "\ad(g_i)\iota_{e_{i+1}}"'] \ar[r, "\iota_{\bar e_{i+1}}"] &
        \ldots \mathcal{H}_{f(w)}.
      \end{tikzcd}
    \]
    where $\ad(g)$ denotes the inner automorphism $x \mapsto gxg^{-1}$.
    This diagram has an appropriate modification when $|f|(e)$ contains no edges.
  \end{enumerate}
  The relevance of these diagrams is that $f$ acts appropriately
  with respect to homotopies of paths: if $h \in \mathcal{G}_e$ is an edge group element,
  we compute
  \begin{align*}
    f_v\iota_e(h)f(e) &= f_v\iota_e(h)g_0e_1g_1\ldots e_ng_n
    = g_0\iota_{e_1}f_{e,e_1}(h)e_1g_1\ldots e_ng_n \\ &=
    g_0e_1\iota_{\bar e_1}f_{e,e_1}(h)g_1 \ldots e_n g_n = \cdots = f(e)f_w\iota_{\bar e}(h).
  \end{align*}
\end{defn}

If $T$ is a $G$-tree, $S$ is an $H$-tree, and $\Phi\colon G \to H$ is a homomorphism,
a continuous map $\tilde{f}\colon T \to S$ is \emph{$\Phi$-twisted equivariant}
if for each $g \in G$ and each $x \in T$,
we have that
\[
  \tilde{f}(g.x) = \Phi(g).\tilde{f}(x).
\]
If $\tilde{f}$ is a $\Phi$-twisted equivariant map
sending vertices to vertices and edges to edge paths,
then there is an induced map of graphs of groups
$f\colon G\doublebackslash T \to H\doublebackslash S$.
Conversely, each map $f\colon \mathcal{G} \to \mathcal{H}$
induces a map on fundamental groups and a twisted equivariant map of Bass--Serre trees.
See~\cite{MyTrainTracks} for details.

If a map of graphs of groups is a pair of maps
$f_X$ and $|f|$,
a \emph{homotopy} of maps of graphs of groups is simply a homotopy of maps
$f_X \colon X_{\mathcal{G}} \to X_{\mathcal{H}}$
and $|f|\colon |\mathcal{G}| \to |\mathcal{H}|$
such that each resulting square commutes.
Although intermediate pairs of maps do not induce
maps with all of the structure above
(for example, the image of a vertex need not be a vertex,
nor the image of an edge an edge path),
homotopy of maps has a combinatorial reflection in $\mathcal{G}$ and $\mathcal{H}$ as well.

\begin{defn}[Homotopy of maps of graphs of groups]
  A pair of maps $\mathcal{G} \to \mathcal{H}$ are \emph{homotopic}
  if one can be transformed into the other by a finite sequence of the following moves.
  \begin{enumerate}
  \item Replace the edge path $f(e)$ with a path homotopic to it rel endpoints
    and replace the homomorphisms $f_{e,e_i}$ with new homomorphisms
    satisfying the compatibility condition above.
  \item If $g$ is an element of $\mathcal{H}_{f(v)}$,
    replace the map $f_v$ with $\ad(g)f_v$.
    Furthermore for each oriented edge $e$ with initial vertex $v$,
    replace the edge path $f(e)$ with $gf(e)$.
  \item Let $e$ be an oriented edge of $\mathcal{H}$
    with initial vertex $w$ and terminal vertex $f(v)$
    such that $f_v(\mathcal{G}_v) \le \iota_{\bar e}(\mathcal{H}_e)$.
    Change the map $f$ by a homotopy with support in a neighborhood of $v$
    by pulling the image of $v$ across $e$.
    The new map, which we shall call $f'$, satisfies
    $f'(v) = w$
    and will satisfy that $f'(e') = ef(e')$ for each oriented edge $e'$ in $\mathcal{G}$
    with initial vertex $v$.
    (Clearly this statement needs the appropriate amendation when $e'$ forms a loop.)
    The new homomorphism $f'_v$ is equal to $\iota_{e}\iota_{\bar e}^{-1}f_v$.
    That is to say,
    it is accomplished by viewing $f_v(\mathcal{G}_v)$ as a subgroup of $\mathcal{H}_e$
    and then mapping it to $\mathcal{H}_w$.
    For each oriented edge $e'$ as above, we add a new homomorphism
    $f'_{e',e}\colon \mathcal{G}_{e'} \to \mathcal{H}_e$
    defined as $f'_{e',e} = \iota_{\bar e}^{-1}f_v\iota_{e'}$.
    It would be a good exercise to verify the compatibility conditions.
  \end{enumerate}
\end{defn}

A map $f\colon \mathcal{G} \to \mathcal{H}$ is a \emph{homotopy equivalence}
if, as usual, there exists a map $g\colon \mathcal{H} \to \mathcal{G}$
such that each of the double compositions $fg$ and $gf$ are homotopic to the identity.
If $f\colon \mathcal{G} \to \mathcal{H}$ is a homotopy equivalence,
then the induced map on fundamental groups is an isomorphism
and the corresponding Bass--Serre trees belong to the same \emph{deformation space}
in the sense of Forester and Guirardel--Levitt---but more about deformation spaces later.

A map $f\colon \mathcal{G} \to \mathcal{H}$ is an \emph{isomorphism}
if the map $|f|$ of underlying graphs is an isomorphism,
and each homomorphism $f_v$ and $f_{e,e'}$ (note that $f(e)$ contains a single edge)
is an isomorphism.
Beyond satisfying the compatibility conditions,
we make no stipulation about the vertex group elements on each path $f(e)$.

\subsection{Deformation spaces and their spines}

The purpose of this subsection is to introduce our perspective on deformation spaces of trees.
It is the graph of groups analogue of Culler--Vogtmann's original definition of Outer Space
for the free group~\cite{CullerVogtmann}.

So, fix once and for all a group $G$ and a graph of groups $\mathbb{G}$
with fundamental group $G$.
We assume that $|\mathbb{G}|$ is a finite, connected graph.

A \emph{metric} on a graph of groups $\mathcal{G}$ is simply an assignment
of positive \emph{lengths} $\ell(e)$ to each edge $e$ of $\mathcal{G}$.

\begin{defn}
  A \emph{marked metric graph of groups} is a pair
  $(\mathcal{G},\sigma)$, where $\mathcal{G}$
  is a finite, connected graph of groups equipped with a metric,
  and $\sigma\colon \mathbb{G} \to \mathcal{G}$ is a homotopy equivalence
  called the \emph{marking.}
  A vertex $v$ of $\mathcal{G}$ is \emph{inessential}
  if for each edge $e$ incident to $v$,
  the edge-to-vertex group inclusion $\mathcal{G}_e \to \mathcal{G}_v$
  is surjective (i.e.\ an isomorphism).
  We assume that $\mathcal{G}$ has no inesential valence-one or valence-two vertices.

  Two marked metric graphs of groups $(\mathcal{G},\sigma)$ and $(\mathcal{G}',\sigma')$
  are equivalent if there is an isomorphism $h\colon \mathcal{G} \to \mathcal{G}'$
  preserving edge lengths
  and satisfying that the following diagram commutes up to homotopy
  \[
    \begin{tikzcd}[row sep = tiny]
      & \mathcal{G} \ar[dd, "h"] \\
      \mathbb{G} \ar[ur, "\sigma"] \ar[dr, "\sigma'"] & \\
      & \mathcal{G}'.
    \end{tikzcd}
  \]
\end{defn}

The \emph{deformation space} of $\mathbb{G}$, call it $\mathscr{T}(\mathbb{G})$, is, as a set,
the collection of equivalence classes of $\mathbb{G}$-marked metric graphs of groups.
In general there are three topologies on $\mathscr{T}(\mathbb{G})$,
the \emph{axes topology} for which the hyperbolic translation length of hyperbolic elements
(or conjugacy classes) are continuous functions on $\mathscr{T}(\mathbb{G})$,
the \emph{equivariant Gromov--Hausdorff topology}
which in a very rough sense describes that the ``shape'' of the trees doesn't change
too much as one moves to a nearby tree
(these agree for $\mathscr{T}(\mathbb{G})$ when the Bass--Serre tree of $\mathbb{G}$
is ``irreducible''---i.e.\ contains a free group acting properly discontinuously)
and the \emph{weak topology,} which we now describe.

We will work only with the \emph{weak topology:}
notice that we may freely vary the assignment of edge lengths to $\mathcal{G}$.
Doing so describes, it turns out,
an embedding (of sets at this point)
of the positive cone in $\mathbb{R}^M$ into $\mathscr{T}(\mathbb{G})$,
where $M$ is the number of edges of $|\mathcal{G}|$.
The whole of $\mathscr{T}(\mathbb{G})$ is a disjoint union of these cones,
and we describe the weak topology by saying that a set $U \subset \mathscr{T}(\mathbb{G})$
is closed if and only if its intersection with each \emph{closed cone}
(to be defined presently) is closed.

In general, these three topologies are really different.
In the particular case where $G$ is a virtually free group
and $\mathbb{G}$ is a finite graph of finite groups,
they all agree.

There is a natural action of $\mathbb{R}_+$ on $\mathscr{T}(\mathbb{G})$
by scaling the length of each edge of a marked metric graph of groups
by the same positive real number.
The quotient by this action, the \emph{projectivized deformation space}
$\mathscr{PT}(\mathcal{G})$,
also admits three topologies.
One might be tempted to identify $\mathscr{PT}(\mathbb{G})$
by for example normalizing the sum of the lengths of edges according to some scheme.
This is an innocent thing to do for a virtually free group $G$
where $\mathbb{G}$ is a finite graph of finite groups,
but caution is needed in general:
this identification of sets need not yield an embedding of topological spaces.

\begin{defn}[Collapsible edges, collapses and their homotopy inverses, closed cones]
  An edge $e$ of $\mathcal{G}$ is \emph{collapsible}
  if its endpoints are distinct, say $v$ and $w$,
  and at least one of the edge-to-vertex group inclusions
  $\mathcal{G}_e \to \mathcal{G}_v$ or $\mathcal{G}_e \to \mathcal{G}_w$
  is surjective (i.e.\ an isomorphism).

  In this situation, define a new graph of groups $\mathcal{G}'$ as follows.
  The graph $|\mathcal{G}'|$ is obtained from $|\mathcal{G}|$ by collapsing the edge $e$
  to a point.
  The vertex group of the common image of $v$ and $w$
  is $\mathcal{G}_e \cong \mathcal{G}_v \cong \mathcal{G}_w$
  if \emph{both} edge-to-vertex group inclusions are isomorphisms in $\mathcal{G}$.
  If one of them, say $\mathcal{G}_e \to \mathcal{G}_v$ is not an isomorphism,
  then the new vertex group is $\mathcal{G}_v$.
  All other edge and vertex groups are equal to what they were in $\mathcal{G}$.
  The edge-to-vertex group inclusions are either what they were in $\mathcal{G}$,
  or of the form $\mathcal{G}_{e'} \to \mathcal{G}_w \cong \mathcal{G}_e \to \mathcal{G}_v$.

  There is a natural map of graphs of groups $k\colon \mathcal{G} \to \mathcal{G}'$.
  We call the map $k$ a \emph{collapse.}
  It would make a good exercise to write it down carefully.
  The map $k$ is a homotopy equivalence, basically because the edge $e$ was collapsible.
  One loose description of a homotopy inverse $f$ is as follows:
  it is ``the identity'' on the ``common'' edges and vertices
  of $\mathcal{G}$ and $\mathcal{G}'$
  except for those edges $e'$ which are incident to $w$ in $\mathcal{G}$,
  on which the map $f\colon \mathcal{G}' \to \mathcal{G}$
  is $e$ (with the proper orientation) followed by $e'$.
  The compatibility conditions essentially follow from the fact that $e$ was collapsible.

  Anyway, because $k$ is a homotopy equivalence,
  if $\sigma\colon \mathbb{G} \to \mathcal{G}$ is a marking,
  we may give $\mathcal{G}'$ the marking $k\sigma$.
  In $\mathscr{T}(\mathbb{G})$, we may think of points of the (open) cone
  determined by $(\mathcal{G}', k\sigma)$ as being a codimension-one face
  of the cone determined by $(\mathcal{G},\sigma)$,
  namely the face where the collapsed edge has been assigned a length of zero.

  Beginning with a marked graph of groups $(\mathcal{G},\sigma)$,
  we define the \emph{closed cone} in $\mathscr{T}(\mathbb{G})$ determined by $(\mathcal{G},\sigma)$
  recursively by beginning with the open cone of $(\mathcal{G},\sigma)$,
  including the face determined by any collapsible edge of $\mathcal{G}$,
  then recursing by adding the faces of the faces and so on.
  
  Thus in $\mathscr{T}(\mathbb{G})$ equipped with the weak topology,
  for any choice of a metric on $\mathcal{G}'$
  and on $\mathcal{G}$, neighborhoods of $(\mathcal{G}', k\sigma)$
  contain points of the cone determined by $(\mathcal{G},\sigma)$.
  If one considers instead projective classes of metrics,
  the same statement is true of simplices in $\mathscr{PT}(\mathbb{G})$.
\end{defn}

The space $\mathscr{PT}(\mathbb{G})$ is not, strictly speaking, a simplicial complex,
but rather (in the weak topology) like a simplicial complex with certain faces removed.
It has a simplicial \emph{spine} to which $\mathscr{PT}(\mathbb{G})$ deformation retracts
(in the weak topology) and which we now describe.

\begin{defn}[The spine of $\mathscr{PT}(\mathbb{G})$]
  The \emph{spine} $K(\mathbb{G})$ of $\mathscr{PT}(\mathbb{G})$ is a simplicial complex.
  The vertices of $K(\mathbb{G})$ are equivalence classes of
  marked graphs of groups $(\mathcal{G},\sigma)$
  (there is no metric), where two marked graphs of groups are equivalent,
  again, if there is an isomorphism $h\colon \mathcal{G}\to \mathcal{G}'$
  preserving the homotopy classes of the markings as before.
  Because there is no metric, there is no stipulation that $h$ preserve edge lengths.
  We remind the reader that $\mathcal{G}$ is not allowed to have
  inessential valence-one or valence-two vertices,
  and that the graph $|\mathcal{G}|$ should be finite and connected.

  A collection of marked graphs of groups $\tau_0,\ldots,\tau_n$ spans an ordered $n$-simplex
  when there are collapses $\tau_n \to \tau_{n-1} \to \cdots \to  \tau_0$
  (or, strictly speaking, between representatives of their equivalence classes).

  In the weak topology, we may embed the vertices of $K(\mathbb{G})$ in
  $\mathscr{PT}(\mathbb{G})$ by sending $(\mathcal{G},\sigma)$
  to the same marked graph of groups equipped with the projective class of a metric
  in which (for instance) every edge has the same length.
  That is, if we think of a cone in $\mathscr{T}(\mathbb{G})$
  as yielding an open simplex in $\mathscr{PT}(\mathbb{G})$,
  pick out its barycenter.
  The closure of this open simplex in $\mathscr{PT}(\mathbb{G})$
  is homeomorphic to a closed simplex in $\mathbb{R}^M$
  minus certain faces.
  If a face belongs to this ``closed'' simplex,
  then its barycenter is a vertex of the spine,
  and this face may be thought of as being obtained by letting certain barycentric coordinates
  go to zero.

  Take the first barycentric subdivision of the ``abstract'' closed simplex in $\mathbb{R}^M$.
  A simplex of this barycentric subdivision corresponds to a simplex of the spine
  $K(\mathbb{G})$ just when
  all of the vertices of the ``abstract'' closed $(M-1)$ simplex
  belong to the ``actual'' closed simplex in $\mathscr{PT}(\mathbb{G})$.

  So for example, if the simplex corresponding to $(\mathcal{G},\sigma)$
  is a triangle with the edges included but not the vertices,
  then the corresponding portion of the spine is a tripod,
  with the center of the tripod corresponding to the barycenter of $(\mathcal{G},\sigma)$,
  and the three leaves corresponding to the other barycenters.
  If we imagine adding one of the missing vertices into $\mathscr{PT}(\mathbb{G})$,
  the portion of the spine adds that vertex of the triangle
  containing the barycenters of the edges incident to that new vertex,
  the vertex itself, and the barycenter of the original triangle.
\end{defn}

It should be essentially clear that
$\mathscr{PT}(\mathbb{G})$ deformation retracts onto $K(\mathbb{G})$
when the former is equipped with the weak topology;
see~\cite{CullerVogtmann,McCulloughMiller} for details.
There is a further deformation retraction---$L(\mathbb{G})$---studied by Clay~\cite{Clay}
which we now describe.

\begin{defn}[Surviving edges, reduced spine]
  The spine $K(\mathbb{G})$ is the geometric realization of a poset
  whose elements are marked graphs of groups
  and whose partial order is the relations $\tau_0 \prec \tau_1$
  where $\tau_1$ collapses onto $\tau_0$.
  This partial order has minimal elements;
  these are called \emph{reduced} marked graphs of groups.
  (Indeed, the number of edges goes down under collapse.)

  If $(\mathcal{G},\sigma)$ is a marked graph of groups representing a vertex
  of $K(\mathbb{G})$,
  say that an edge $e$ of $|\mathcal{G}|$ is \emph{surviving}
  if there exists a reduced collapse $\mathcal{G}'$
  of $\mathcal{G}$ in which the edge $e$ survives---i.e.\ is not collapsed.

  The \emph{reduced spine $L(\mathbb{G})$ of the deformation space}
  is the subcomplex of $K(\mathbb{G})$ spanned by those marked graphs of groups
  all of whose edges are surviving.
\end{defn}

By collapsing edges which are not surviving,
one has a deformation retraction from $K(\mathbb{G})$ to $L(\mathbb{G})$
(This result is a theorem of Clay~\cite{Clay}).
When $\mathbb{G}$ is a graph (with trivial vertex and edge groups),
so that $G$ is a free group of some finite rank $n$,
then every reduced marked graph (of groups)
is a \emph{rose} with $n$ petals;
in particular, none of its edges are separating.
Since a separating edge remains separating in any collapse in which it survives,
we see that $L(\mathbb{G})$ really is the complex $L$ considered by Culler--Vogtmann
in~\cite{CullerVogtmann}.
That is, vertices of $L$ are marked rank-$n$ graphs with no valence-one or valence-two
vertices and no separating edges.

Clay~\cite{Clay} and Guirardel--Levitt~\cite{GuirardelLevitt} give a characterization
of when a graph of groups has all edges surviving.

\begin{defn}
  A \emph{shelter} in a graph of groups $\mathcal{G}$ is a subgraph of groups $\mathcal{S}$
  of one of the following three forms.
  \begin{enumerate}
  \item (Segment shelter.) The graph $|\mathcal{S}|$ is homemorphic to an interval.
    Every valence-two vertex of $|\mathcal{S}|$ is inesential in $\mathcal{S}$,
    but neither valence-one vertex is inessential.
  \item (Basepointed loop shelter.) The graph $|\mathcal{S}|$ is homeomorphic to a circle.
    There is a distinguished vertex $v$, neither of whose edge-to-vertex group inclusions
    is surjective, while every other vertex is inessential.
  \item (Oriented loop shelter.) The graph $|\mathcal{S}|$ is homeomorphic to a circle,
    and there is an orientation on the circle such that if $e$ is positively oriented,
    then the initial edge-to-vertex group inclusion is surjective,
    while the other may or may not be.
  \end{enumerate}
\end{defn}

We will say that an oriented loop shelter is \emph{flat} if may be oriented in either
direction: that is, \emph{every} edge-to-vertex group inclusion on the loop is surjective.
If an oriented loop shelter is not flat, we call it \emph{ascending.}
A deformation space is \emph{ascending} if it contains a marked graph of groups
$(\mathcal{G},\sigma)$ with an ascending oriented loop shelter,
and \emph{non-ascending} otherwise:
that is, in a non-ascending deformation space, all oriented loop shelters are flat.

Clay proves~\cite[Proposition 1.13]{Clay} that an edge of $\mathcal{G}$ is surviving
if and only if it is contained in a shelter.

\begin{defn}[Deformation group]
  If $G = \pi_1(\mathbb{G})$, notice that each homotopy equivalence
  $\mathbb{G} \to \mathbb{G}$ induces, after choosing basepoints,
  an automorphism of $G$.
  If we consider the (free) homotopy classes of these homotopy equivalences,
  we obtain a group, which we denote $\Mod(\mathbb{G})$,
  the \emph{deformation group} or \emph{modular group}
  of the deformation space of $\mathbb{G}$.
  It is naturally a subgroup of $\out(G)$.
  If $\tau = (\mathcal{G},\sigma)$ is a $\mathbb{G}$-marked graph of groups,
  the marking $\sigma$ provides a canonical identification
  of $\Mod(\mathcal{G})$ with $\Mod(\mathbb{G})$,
  so this group is really a property of the deformation space.
  In fact, $\Mod(\mathbb{G})$ naturally acts on $\mathscr{T}(\mathbb{G})$
  (and $\mathscr{PT}(\mathbb{G})$, $K(\mathbb{G})$ and $L(\mathbb{G})$)
  on the right by the rule that the homotopy class of $f\colon \mathbb{G} \to \mathbb{G}$
  sends $\tau = (\mathcal{G},\sigma)$ to
  \[ \tau.f = (\mathcal{G}, \sigma f). \]
  Because the combinatorial type of $\mathcal{G}$ is preserved,
  when considered on $K(\mathbb{G})$ or $L(\mathbb{G})$,
  this action is by simplicial automorphisms.

  We say that the deformation space of $\mathbb{G}$ is \emph{canonical}
  when $\Mod(\mathbb{G}) = \out(\pi_1(\mathbb{G}))$.
  This happens when elliptic subgroups of $\mathbb{G}$ are ``algebraically determined''
  by $G = \pi_1(\mathbb{G})$:
  for example, if elliptic subgroups (hence vertex and edge groups) of $\mathbb{G}$
  are finite, then $G$ is virtually free
  and the deformation space of $\mathbb{G}$ is canonical,
  because finite groups always act elliptically on trees,
  and any automorphism sends finite subgroups to finite subgroups.

  Another canonical deformation space is associated to the \emph{Grushko decomposition}
  of a finitely generated group $G = A_1 * \cdots * A_k * F_n$,
  where the $A_i$ are freely indecomposable (and finitely generated)
  and $F_n$ is free of finite rank $n$.
\end{defn}

The interest in deformation spaces of trees
is that they are always contractible
(in the weak topology)~\cite{CullerVogtmann,GuirardelLevitt,Clay},
proving that $\Mod(\mathbb{G})$ is of type $F_\infty$
in situations when it acts properly discontinuously and cocompactly
on $K(\mathbb{G})$ or $L(\mathbb{G})$.
In many such cases, the group $\Mod(\mathbb{G})$ is furthermore
virtually torsion free,
and thus virtually of type $F$.

One such case where this happens is when $\mathscr{T}(\mathbb{G})$
is the canonical deformation space of a virtually free group.
When $G = \pi_1(\mathbb{G})$ is virtually free and a free product
(i.e. a ``plain'' group; a free product of finitely many finite groups
and a free group of finite rank),
Krsti\'c and Vogtmann show that the dimension of $L(\mathbb{G})$
is equal to the virtual cohomological dimension of $\out(G)$.
This is false for general virtually free groups.

\subsection{Links in \texorpdfstring{$K$}{K} and \texorpdfstring{$L$}{L}}
The purpose of this subsection is to begin the combinatorial study
of links in $L(\mathbb{G})$ from a graph-of-groups point of view.
In the case of the canonical deformation space of a free group
(that is, Culler--Vogtmann Outer Space),
this study was undertaken by Culler and Vogtmann~\cite{CullerVogtmann}.
A slightly different perspective on the complex $L(\mathbb{G})$
when $G = \pi_1(\mathbb{G})$ is virtually free appears in~\cite{KrsticVogtmann}.

\begin{defn}[Set of directions]
  Let $v$ be a vertex of a marked graph of groups $(\mathcal{G},\sigma)$.
  We write $\st(v)$ for the set of oriented edges with initial vertex $v$.
  The set of \emph{directions} at $v$ is
  \[
    D_v = \coprod_{e \in \st(v)} \mathcal{G}_v / \iota_e(\mathcal{G}_e) \times \{e\}.
  \]
  For the reader more familiar with Bass--Serre theory,
  note that this set is in equivariant bijection with the set of oriented edges
  incident to some and hence any lift of $v$ to the Bass--Serre tree of $\mathcal{G}$.
  There is an obvious action of $\mathcal{G}_v$ on $D_v$;
  each orbit is an oriented edge $e \in \st(v)$.

  If $d \in D_v$ is a direction, we will write $\mathcal{G}_d$ for the stabilizer
  of $d$ under the action of $\mathcal{G}_v$.
  When $d$ is in the orbit $e \in \st(v)$, we have that $\mathcal{G}_d$
  is a conjugate of $\mathcal{G}_e$.
\end{defn}

\begin{defn}[Ideal edge]
  Let $\alpha \subset D_v$ be a subset of $D_v$
  and $\mathcal{G}_\alpha$ a subgroup of $\mathcal{G}_v$.
  We say that the pair $(\alpha, \mathcal{G}_\alpha)$
  is an \emph{ideal edge $\alpha$ with stabilizer $\mathcal{G}_\alpha$}
  when for each direction $d \in \alpha$,
  we have that $\mathcal{G}_v.d \cap \alpha = \mathcal{G}_\alpha.d$.
  That is, a direction is contained in an ideal edge $\alpha$ if and only if
  its orbit under $\mathcal{G}_\alpha$ is contained in $\alpha$.
\end{defn}

The group $\mathcal{G}_v$ acts on the set of ideal edges $(\alpha, \mathcal{G}_\alpha)$
based at $v$ by the rule
$g.(\alpha, \mathcal{G}_\alpha) = (g.\alpha, g\mathcal{G}_\alpha g^{-1})$.
We say that ideal edges $\alpha$ and $\alpha'$ are \emph{equivalent}
if they belong to the same $\mathcal{G}_v$-orbit.

The collection of equivalence classes of ideal edges is partially ordered,
where we say that $[\alpha, \mathcal{G}_\alpha] \sqsubset [\beta, \mathcal{G}_\beta]$
or say that $[\alpha,\mathcal{G}_\alpha]$ is \emph{nested} in $[\beta,\mathcal{G}_\beta]$
if there exist representatives satisfying $\alpha \subset \beta$
and $\mathcal{G}_\alpha \le \mathcal{G}_\beta$.
Furthermore, we say that $[\alpha, \mathcal{G}_\alpha]$ and $[\beta, \mathcal{G}_\beta]$
are \emph{disjoint} if they are not related by $\sqsubset$
and additionally $\mathcal{G}_v.\alpha \cap \mathcal{G}_v.\beta = \varnothing$.

In the special case where $\mathcal{G}_\alpha = \mathcal{G}_v$,
observe that $(D_v - \alpha, \mathcal{G}_v)$ is also an ideal edge.

Two (equivalence classes of) ideal edges based at a common vertex $v$ are \emph{compatible}
when they are disjoint or nested.
We also say that ideal edges based at different vertices are compatible.

An \emph{oriented ideal forest} $\Phi^+$ in $\mathcal{G}$ is a collection of equivalence classes
of ideal edges which are pairwise compatible
and satisfies the condition
that if $[\alpha, \mathcal{G}_\alpha = \mathcal{G}_v]$ is an ideal edge in $\Phi^+$,
then $\Phi^+$ does not contain $[D_v - \alpha, \mathcal{G}_v]$.

If $[\alpha, \mathcal{G}_v]$ is an ideal edge in an oriented ideal forest $\Phi^+$,
observe that every ideal edge which is compatible with $[\alpha, \mathcal{G}_v]$
is also compatible with $[D_v - \alpha, \mathcal{G}_v]$.
We call the operation of replacing $\Phi^+$ with
$\Phi^+ \cup \{[D_v - \alpha, \mathcal{G}_v]\} - \{[\alpha, \mathcal{G}_v]\}$
\emph{flipping the orientation of the ideal edge $\alpha$.}
Flipping orientations of ideal edges generates an equivalence relation
on oriented ideal forests;
an equivalence class is an \emph{ideal forest.}

An \emph{oriented collapsible forest} $\Psi^+$ in $\mathcal{G}$ is a collection of edges of $|\mathcal{G}|$,
each of which is collapsible and equipped with an orientation such that the following conditions hold.
\begin{enumerate}
\item Each component of $\Psi^+$ is a tree (i.e.\ a graph of groups with no loops).
\item The terminal edge-to-vertex group inclusion for each oriented edge in $\Psi^+$ is surjective.
\item Each component $C$ of $\Psi^+$ is oriented away from a distinguished vertex $v_C$.
\end{enumerate}
Call a vertex $v$ of $C$ \emph{heavy} if no edge-to-vertex group inclusion between $v$ and an edge of $C$
is surjective.
If a component $C$ of an oriented collapsible forest $\Psi^+$ has a heavy vertex,
it must have only one, and that vertex must be $v_C$.
If no vertex of $C$ is heavy, it may be the case that reversing the orientation
of some edge $e$ of $\Psi^+$ yields another oriented collapsible forest.
This happens if and only if both edge-to-vertex group inclusions involving $e$ are surjective.
The operation of flipping the orientation on $e$ generates an equivalence relation
on oriented collapsible forests;
an equivalence class is a \emph{collapsible forest.}

\paragraph{Blowing up ideal edges.}
Suppose $\alpha \subset D_v$ is an ideal edge based at a vertex
of the graph of groups $\mathcal{G}$.
We describe a new graph of groups $\mathcal{G}^\alpha$.
Combinatorially, the graph $|\mathcal{G}^\alpha|$ is obtained from $|\mathcal{G}|$
by adding one new edge $\alpha$ incident to $v$ and to one new vertex $v_\alpha$.
(We orient $\alpha$ so that $v_\alpha$ is its terminal vertex.)
If an oriented edge $e \in \st(v)$ supports a direction $d$ contained in $\alpha$,
then in $\mathcal{G}^\alpha$,
we make the initial vertex of $e$ equal to $v_\alpha$ instead.
Otherwise the graph $|\mathcal{G}^\alpha|$ is combinatorially identical to $|\mathcal{G}|$.
The vertex and edge groups of $\mathcal{G}^\alpha$ are equal to what they were in $\mathcal{G}$,
with $\mathcal{G}^\alpha_\alpha = \mathcal{G}^\alpha_{v_\alpha}
= \mathcal{G}_\alpha \le \mathcal{G}_v$. 
The inclusion $\mathcal{G}^\alpha_\alpha \to \mathcal{G}^\alpha_v$
is the natural inclusion $\mathcal{G}_\alpha \to \mathcal{G}_v$;
the inclusion $\mathcal{G}^\alpha_\alpha \to \mathcal{G}^\alpha_{v_\alpha}$ is the identity.
If an edge $e \in \st(v)$ supports a direction $d \in \alpha$,
choose one such $d = (g\iota_e(\mathcal{G}_e), e)$.
We have that $g \mathcal{G}_e g^{-1}$,
the stabilizer of $d$ under the action of $\mathcal{G}_v$ on $D_v$,
is a subgroup of $\mathcal{G}_\alpha$,
and we let the inclusion $\mathcal{G}^\alpha_e \to \mathcal{G}^\alpha_{v_\alpha}$
be the map $h \mapsto ghg^{-1}$.
Otherwise the oriented edge $e$ has initial vertex in $|\mathcal{G}^\alpha|$
equal to what it had in $|\mathcal{G}|$,
so we let the map $\iota_e$ remain what it was in $\mathcal{G}$.

There is a homotopy equivalence $f\colon \mathcal{G} \to \mathcal{G}^\alpha$
defined as follows.
If $e$ is an edge of $|\mathcal{G}|$ in $\st(v)$
which becomes incident to $v_\alpha$ in $\mathcal{G}^\alpha$,
the image $f(e)$ is $g^{-1}\alpha e$,
where $g$ is the element chosen above.
(If \emph{both} incident vertices to $e$ in $\mathcal{G}^\alpha$ are $v_\alpha$,
we have instead that $f(e) = g^{-1}\alpha e \bar{\alpha} g'$,
where $g'$ is the element chosen for $\bar e$ above.)
The maps $f_v$ are the identity.
Each map of the form $f_{e,e}$ is also the identity.
Each map of the form $f_{e,\alpha}$ is the map $h \mapsto ghg^{-1}$.
Assuming that $f(e) = g^{-1}\alpha e$, one checks the compatibility condition as follows
\[
  \begin{tikzcd}
    \iota_e(h) \ar[d, maps to] & &
    h \in \mathcal{G}_e \ar [ll, maps to] \ar[ld, maps to] \ar[rd, maps to]
    \ar[rr, maps to] & &
    \iota_{\bar e}(h) \ar[d, maps to] \\
    \iota_e(h) & ghg^{-1} \in \mathcal{G}^\alpha_\alpha \ar[l, maps to] \ar[r, maps to]
    & ghg^{-1} & h \in \mathcal{G}^\alpha_e \ar[l, maps to] \ar[r, maps to] & \iota_{\bar e}(h).
  \end{tikzcd}
\]
It is from here an easy exercise to verify the compatibility conditions
when $f(e) = g^{-1}\alpha e \bar \alpha g'$.
To show that this map is a homotopy equivalence,
observe that $\alpha$ is a collapsible edge;
collapsing it yields a homotopy inverse for $f$.

When $\sigma\colon \mathbb{G} \to \mathcal{G}$ is a marking,
composing with $f\colon \mathcal{G} \to \mathcal{G}^\alpha$
yields a marking $\sigma^\alpha\colon \mathbb{G}\to \mathcal{G}^\alpha$
defined as $\sigma^\alpha = f\sigma$.

The following lemma is an easy exercise.
\begin{lem}\label{atleasttwo}
  Suppose that ${G}^\alpha$ has no inessential valence-one or valence-two vertices.
  Then the sets $\alpha$ and $D_v - \alpha$ in $D_v$ contain at least two elements.\hfill\qedsymbol
\end{lem}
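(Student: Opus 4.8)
The plan is to prove the contrapositive of each of the two assertions by exhibiting, in the blow-up $\mathcal{G}^\alpha$, an inessential vertex of valence at most two. Only two vertices of $\mathcal{G}^\alpha$ differ from those of $\mathcal{G}$: the new vertex $v_\alpha$, with vertex group $\mathcal{G}_\alpha$, incident to the new edge $\alpha$ (whose inclusion into $\mathcal{G}_{v_\alpha}=\mathcal{G}_\alpha$ is the identity, hence an isomorphism) together with exactly those edges of $\st(v)$ carrying a direction in $\alpha$; and the old vertex $v$, now incident to $\alpha$ together with the edges of $\st(v)$ carrying no direction in $\alpha$. Throughout I use the containment $\mathcal{G}_d\le\mathcal{G}_\alpha$, for $d\in\alpha$, which is recorded in the construction of $\mathcal{G}^\alpha$.

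First I would obtain $|\alpha|\ge 2$ by inspecting $v_\alpha$. If $\alpha=\varnothing$, then $v_\alpha$ has valence one and its unique edge-to-vertex inclusion is an isomorphism, so it is inessential. If $\alpha=\{d\}$ with $d$ a direction on an edge $e_0$, then $v_\alpha$ has valence two; the ideal-edge condition gives $\mathcal{G}_\alpha.d=\mathcal{G}_v.d\cap\alpha=\{d\}$, so $\mathcal{G}_\alpha\le\mathcal{G}_d$, and with $\mathcal{G}_d\le\mathcal{G}_\alpha$ we get $\mathcal{G}_\alpha=\mathcal{G}_d$; then the edge-to-vertex inclusion of $e_0$ at $v_\alpha$, which is the conjugation map with image $\mathcal{G}_d$, is also an isomorphism, so $v_\alpha$ is again inessential. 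Either way the hypothesis fails, giving $|\alpha|\ge2$.

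Next I would obtain $|D_v-\alpha|\ge2$; assume $|D_v-\alpha|\le1$ and, by the previous step, $|\alpha|\ge2$, and split on whether $\mathcal{G}_\alpha=\mathcal{G}_v$. If $\mathcal{G}_\alpha=\mathcal{G}_v$, the ideal-edge condition forces the whole $\mathcal{G}_v$-orbit of each $d\in\alpha$ into $\alpha$, so $\alpha$, and hence $D_v-\alpha$, is a union of $\mathcal{G}_v$-orbits of directions (each such orbit being one block $\mathcal{G}_v/\iota_e(\mathcal{G}_e)\times\{e\}$); a union of these of size at most one is either empty or a single orbit $\{d'\}$ coming from an edge $e_1$ with $\iota_{e_1}$ surjective, and in either case $v$ becomes a valence-$\le2$ vertex of $\mathcal{G}^\alpha$ all of whose incident edge inclusions are isomorphisms, hence inessential, a contradiction. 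If instead $\mathcal{G}_\alpha\subsetneq\mathcal{G}_v$, the converse of the implication just used shows no $\mathcal{G}_v$-orbit of directions lies inside $\alpha$; since each orbit is one block of $D_v$ and $|D_v-\alpha|\le1$, this forces $D_v$ to be a single orbit with $\alpha=D_v\setminus\{d'\}$. Then $\mathcal{G}_\alpha$ acts transitively on $\alpha$ (ideal-edge condition) and preserves $\{d'\}$, so $\mathcal{G}_v$ is $2$-transitive on $D_v$; but for $d_1\in\alpha$ the containment $\mathcal{G}_{d_1}\le\mathcal{G}_\alpha\le\mathcal{G}_{d'}$ makes the point stabilizer $\mathcal{G}_{d_1}$ fix $d'$ while acting transitively on the $|D_v|-1\ge2$ directions other than $d_1$, which is impossible. (When the vertex group is finite---the case of interest for virtually free groups and free splittings---one reaches this contradiction faster by comparing orders, which pins $|D_v|$ to two and $|\alpha|$ to one.)

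The only real difficulty is the last case $\mathcal{G}_\alpha\subsetneq\mathcal{G}_v$: unlike in Culler and Vogtmann's trivial-vertex-group setting, the statement is not merely ``a partition into two parts of size $\ge2$,'' and one must play the ideal-edge condition off against the containment $\mathcal{G}_d\le\mathcal{G}_\alpha$ to exclude the exotic configuration in which $\mathcal{G}_v$ acts $2$-transitively on $D_v$. Recognising that configuration as the obstruction is the heart of the exercise.
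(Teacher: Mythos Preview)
The paper does not actually give a proof of this lemma; it is stated as an ``easy exercise'' and left to the reader. Your argument is correct and follows the expected line: the only vertices of $\mathcal{G}^\alpha$ whose local structure differs from that of $\mathcal{G}$ are $v_\alpha$ and $v$, and one checks that $|\alpha|\le 1$ makes $v_\alpha$ inessential of valence at most two, while $|D_v-\alpha|\le 1$ forces trouble at $v$.

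One remark on the final case $\mathcal{G}_\alpha \subsetneq \mathcal{G}_v$: here $v$ is \emph{not} inessential in $\mathcal{G}^\alpha$ (the inclusion $\mathcal{G}_\alpha \hookrightarrow \mathcal{G}_v$ is proper), so the contrapositive strategy cannot produce a bad vertex directly. Your $2$-transitivity argument instead shows that this configuration is simply impossible, which is the right move. The step you pass over quickly---that no full $\mathcal{G}_v$-orbit can lie in $\alpha$ when $\mathcal{G}_\alpha \subsetneq \mathcal{G}_v$---deserves a line: if $\mathcal{G}_v.d \subseteq \alpha$ then the ideal-edge condition gives $\mathcal{G}_v.d = \mathcal{G}_\alpha.d$, whence $\mathcal{G}_v = \mathcal{G}_\alpha\mathcal{G}_d = \mathcal{G}_\alpha$ by the containment $\mathcal{G}_d \le \mathcal{G}_\alpha$. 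With that filled in, the argument is complete. Your observation that finite vertex groups allow a shortcut via order comparison is accurate, and the $2$-transitivity trick is a clean way to handle the general case---arguably a bit more than ``easy exercise'' suggests, but necessary at the stated level of generality.
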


In fact, the converse is also true provided $\mathcal{G}$ had no inessential valence-one or valence-two vertices.

Observe that if $[\beta,\mathcal{G}_\beta]$ is compatible with $[\alpha, \mathcal{G}_\alpha]$
(and not equal to $[D_v - \alpha, \mathcal{G}_v]$),
then we may think of $[\beta, \mathcal{G}_\beta]$ as an ideal edge in $\mathcal{G}^\alpha$.
If the original ideal edges were both based at $v$,
then $[\beta, \mathcal{G}_\beta]$ will continue to be based at $v$ if it is disjoint from
$[\alpha,\mathcal{G}_\alpha]$; otherwise $[\beta,\mathcal{G}_\beta]$ is based at $v_\alpha$.

Therefore by repeatedly blowing up $\sqsubset$-maximal edges of an oriented
ideal forest $\Phi^+$, we obtain a marked graph of groups $(\mathcal{G}^{\Phi^+},\sigma^{\Phi^+})$.
Repeated application of \Cref{atleasttwo} proves the following.

\begin{lem}\label{forestatleasttwo}
  Suppose that $(\mathcal{G},\sigma)$ represents a vertex of $K(\mathbb{G})$
  and $\Phi^+$ is an oriented ideal forest
  such that each ideal edge $\alpha \in \Phi^+$ satisfies
  that $\alpha$ and $D_v - \alpha$ contain at least two elements. Then
  the marked graph of groups $(\mathcal{G}^{\Phi^+},\sigma^{\Phi^+})$
  represents a vertex of $K(\mathbb{G})$.

  Moreover, the edges of $\Phi^+$ form an oriented collapsible forest in $\mathcal{G}^{\Phi^+}$
  and collapsing them recovers $(\mathcal{G},\sigma)$.\hfill\qedsymbol
\end{lem}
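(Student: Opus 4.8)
The plan is to induct on the number of $\mathcal{G}_v$-equivalence classes of ideal edges comprising $\Phi^+$. If $\Phi^+$ is empty then $(\mathcal{G}^{\Phi^+},\sigma^{\Phi^+}) = (\mathcal{G},\sigma)$ and there is nothing to prove. Otherwise choose a $\sqsubset$-maximal ideal edge $[\alpha,\mathcal{G}_\alpha] \in \Phi^+$, necessarily based at an original vertex $v$ of $\mathcal{G}$, and blow it up to obtain $\mathcal{G}^\alpha$, the homotopy equivalence $f\colon \mathcal{G} \to \mathcal{G}^\alpha$, and the marking $\sigma^\alpha = f\sigma$ as constructed above. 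The graph $|\mathcal{G}^\alpha|$ is again finite and connected, since we have only attached one new edge to the pre-existing vertex $v$, and $\sigma^\alpha$ is a genuine marking because $f$ is a homotopy equivalence. Since $\alpha$ and $D_v - \alpha$ each contain at least two directions and $\mathcal{G}$, representing a vertex of $K(\mathbb{G})$, has no inessential valence-one or valence-two vertices, the converse to \Cref{atleasttwo} recorded just afterward shows that $\mathcal{G}^\alpha$ has no inessential valence-one or valence-two vertices either; hence $(\mathcal{G}^\alpha,\sigma^\alpha)$ represents a vertex of $K(\mathbb{G})$.

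Next I would check that $\Phi^+ \setminus \{[\alpha,\mathcal{G}_\alpha]\}$, reinterpreted inside $\mathcal{G}^\alpha$, is once more an oriented ideal forest each of whose ideal edges satisfies the two-element hypothesis, so that the inductive hypothesis applies to it (relative to the vertex $(\mathcal{G}^\alpha,\sigma^\alpha)$) and yields both that $(\mathcal{G}^{\Phi^+},\sigma^{\Phi^+})$ is a vertex of $K(\mathbb{G})$ and the ``moreover'' clause for the edges of $\Phi^+ \setminus \{[\alpha,\mathcal{G}_\alpha]\}$. Each remaining $[\beta,\mathcal{G}_\beta]$ is compatible with $[\alpha,\mathcal{G}_\alpha]$ and, by the definition of an oriented ideal forest, is not $[D_v - \alpha,\mathcal{G}_v]$; hence, by the observations recorded above, it becomes an ideal edge of $\mathcal{G}^\alpha$, based at $v$ when disjoint from $\alpha$ and at the new vertex $v_\alpha$ when nested in $\alpha$ (it cannot strictly contain $\alpha$, by maximality). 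Pairwise compatibility and the prohibition on orientation-flips are preserved verbatim. The genuinely delicate point — and the main obstacle — is that the two-element condition survives at the migrated basepoint $v_\alpha$: one uses that the set of directions at $v_\alpha$ in $\mathcal{G}^\alpha$ is $\alpha$ together with the single new direction along $\bar\alpha$, so that $\beta$ is unchanged and of size at least two while its complement is $(\alpha - \beta)$ augmented by that new direction; here $\alpha - \beta$ is nonempty because proper nesting of ideal edges forces proper containment of the underlying direction sets, and the ``disjoint and covering'' degeneration is precisely the configuration that the oriented-ideal-forest axiom (no ideal edge together with its complement $[D_v - \alpha,\mathcal{G}_v]$) rules out.

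Finally, for the ``moreover'' clause, once every ideal edge of $\Phi^+$ has been blown up each of them is an honest edge of $|\mathcal{G}^{\Phi^+}|$ joining its source to the new vertex $v_\alpha$, whose vertex group is $\mathcal{G}_\alpha$ and whose inclusion from the edge group $\mathcal{G}_\alpha$ is the identity; thus each such edge is collapsible and, oriented toward $v_\alpha$, has surjective terminal edge-to-vertex inclusion. A connected component of the subgraph these edges span contains exactly one original vertex $v$, being the union of the pairwise-disjoint $\sqsubset$-maximal edges based at $v$ together with all of their $\sqsubset$-descendants; nesting contributes paths issuing from $v$ and disjointness contributes separate branches, so the component is a tree oriented away from $v$. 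Hence the edges of $\Phi^+$ form an oriented collapsible forest in $\mathcal{G}^{\Phi^+}$. Collapsing these edges is, one step at a time, the homotopy inverse of the corresponding blow-up (the excerpt records that collapsing $\alpha$ inverts $f$), so the composite collapse $k$ has underlying graph of groups (isomorphic to) $\mathcal{G}$ and satisfies $k\sigma^{\Phi^+} \simeq \sigma$; thus collapsing recovers $(\mathcal{G},\sigma)$ as a vertex of $K(\mathbb{G})$.
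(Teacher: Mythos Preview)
Your proposal is correct and follows exactly the approach the paper intends: the paper's ``proof'' is literally the sentence ``Repeated application of \Cref{atleasttwo} proves the following'' together with the preceding remark that one blows up $\sqsubset$-maximal ideal edges one at a time, and your induction is a careful unpacking of precisely this. In fact you supply substantially more detail than the paper does---in particular the verification that the two-element hypothesis persists for the remaining ideal edges after each blow-up, which the paper leaves implicit.
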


\begin{lem}
  Suppose $\Psi^+$ is an oriented collapsible forest in $(\mathcal{G},\sigma)$,
  a marked graph of groups representing a vertex of $K(\mathbb{G})$.
  Then in the collapsed marked graph of groups $(\mathcal{G}', \sigma')$,
  there exists an oriented ideal forest $\Phi^+$ such that the marked graph of groups
  $(\mathcal{G}'^{\Phi^+},\sigma'^{\Phi^+})$ is equivalent to $(\mathcal{G},\sigma)$.
\end{lem}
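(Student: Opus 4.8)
The plan is to show that an oriented collapsible forest and an oriented ideal forest are essentially the same data viewed from the two sides of a blow-up, so that the lemma is the ``converse'' to \Cref{forestatleasttwo}. Concretely, given $\Psi^+$ in $(\mathcal{G},\sigma)$, let $(\mathcal{G}',\sigma')$ be the result of collapsing $\Psi^+$ (with $\sigma' = k\sigma$ for the collapse $k$). I will produce, for each component $C$ of $\Psi^+$ with distinguished vertex $v_C$, an ideal edge based at the image vertex $v = k(v_C)$ in $\mathcal{G}'$, recording which directions at $v$ came from edges of $C$ and what the stabilizer subgroup $\mathcal{G}_\alpha \le \mathcal{G}'_v$ is (it will be the vertex group $\mathcal{G}_{v_C}$, or its relevant conjugate, sitting inside $\mathcal{G}'_v$ via the composite of edge-to-vertex inclusions along $C$, using that the terminal inclusions in $\Psi^+$ are surjective). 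The orientation away from $v_C$ is exactly what picks out the ``$\alpha$'' side rather than the ``$D_v - \alpha$'' side. One must check that the resulting subset of $D_v$ together with this subgroup genuinely satisfies the ideal edge condition $\mathcal{G}'_v.d \cap \alpha = \mathcal{G}_\alpha.d$, which follows because distinct edges of $C$ incident to $v_C$ lie in distinct $\mathcal{G}_{v_C}$-orbits of directions before collapsing and collapsing an edge with surjective terminal inclusion merges vertex groups without merging direction orbits in a way that would violate this.

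Next I would verify that the ideal edges obtained from distinct components of $\Psi^+$, and from nested vertices within a single component, are pairwise compatible (disjoint or nested) and that no $\alpha$ and $D_v - \alpha$ both occur, so that $\Phi^+$ is a genuine oriented ideal forest; this is where the tree condition on components of $\Psi^+$ and the ``away from $v_C$'' orientation are used. The condition that $\alpha$ and $D_v - \alpha$ each have at least two elements, needed to invoke \Cref{forestatleasttwo}, should follow from the hypothesis that $(\mathcal{G},\sigma)$ had no inessential valence-one or valence-two vertices, via the converse remark stated just after \Cref{atleasttwo}: the blow-up we are undoing is $(\mathcal{G},\sigma)$ itself, so its valence conditions force the size conditions on $\alpha$.

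The final step is to check that $(\mathcal{G}'^{\Phi^+}, \sigma'^{\Phi^+})$ is equivalent to $(\mathcal{G},\sigma)$. One builds an explicit isomorphism $h\colon \mathcal{G}'^{\Phi^+} \to \mathcal{G}$ of graphs of groups: on the level of graphs it is clear, since blowing up the $\alpha$'s reintroduces exactly the collapsed edges of $\Psi^+$ with the same incidences; on vertex and edge groups it is the identity up to the conjugating elements $g$ chosen in the blow-up construction and the collapse construction. Then one checks $h \sigma'^{\Phi^+} = h f k \sigma$ is homotopic to $\sigma$, using that $f$ (the blow-up homotopy equivalence) and $k$ (the collapse) are homotopy inverse up to the explicit homotopies described in the definitions of collapse and blow-up.

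The main obstacle is bookkeeping the conjugating elements and basepoint data consistently: the collapse map $k$, the blow-up map $f$, and the identifications of edge groups along a component $C$ of $\Psi^+$ each introduce $\ad(g)$'s, and one must choose them compatibly so that the diagrams defining a map of graphs of groups commute on the nose and so that $h$ is actually an isomorphism rather than merely a homotopy equivalence. I expect that organizing the argument one component of $\Psi^+$ at a time, and within a component inducting on the distance from $v_C$, keeps this manageable; the geometric content is entirely contained in \Cref{atleasttwo} and \Cref{forestatleasttwo}, and what remains is careful translation between the collapsible-forest and ideal-forest languages.
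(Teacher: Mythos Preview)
Your overall strategy is right, and by the final paragraph you have essentially landed on the paper's argument; but the first paragraph contains a real confusion that you should straighten out before calling this a proof. You propose to produce \emph{one} ideal edge per component $C$, with stabilizer $\mathcal{G}_{v_C}$. Neither is correct in general. A component with $m$ edges needs $m$ nested ideal edges to recover upon blow-up, since each blow-up adds exactly one edge. And for the ideal edge undoing the collapse of a single edge $e$ of $\Psi^+$ with terminal vertex $w$, the stabilizer must be $\mathcal{G}_\alpha = \mathcal{G}_e \cong \mathcal{G}_w$, in general a proper subgroup of $\mathcal{G}'_v = \mathcal{G}_{v_C}$. (Indeed $\mathcal{G}'_v$ already \emph{equals} $\mathcal{G}_{v_C}$, so the phrase ``$\mathcal{G}_{v_C}$ sitting inside $\mathcal{G}'_v$ via the composite of edge-to-vertex inclusions along $C$'' does not parse as written; presumably you meant $\mathcal{G}_w$ for some other vertex $w$ of $C$.) Your second and fourth paragraphs quietly correct the count by invoking ``nested vertices within a single component'' and ``inducting on the distance from $v_C$'', but the stabilizer description is never fixed.

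The paper avoids this bookkeeping entirely by making the induction you sketch at the end the whole proof. It collapses a single edge $e$ of $\Psi^+$ incident to $v_C$, writes down the one ideal edge $(\alpha,\mathcal{G}_e)$ in the once-collapsed graph whose blow-up undoes that collapse (here $\alpha$ is the $\mathcal{G}_e$-orbit of the directions supported on edges that were incident to the terminal vertex of $e$), observes that the remaining edges of $\Psi^+$ still form an oriented collapsible forest, and applies the inductive hypothesis. Carried out this way, one never has to assemble or verify compatibility of the full ideal forest $\Phi^+$ in $\mathcal{G}'$ at once.
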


\begin{proof}
  We show that the statement in the lemma holds when collapsing
  a single edge of $\Psi^+$ which is incident to the source vertex $v_C$.
  An easy inductive argument completes the proof.

  To prove the statement, we need an ideal edge $(\alpha, \mathcal{G}_\alpha)$
  based at $v_C$ in the collapsed graph of groups such that $(\mathcal{G}'^\alpha, \sigma'^\alpha)$
  is equivalent to $(\mathcal{G},\sigma)$.
  Indeed, take $\mathcal{G}_\alpha = \mathcal{G}_e$, the stabilizer of the collapsed edge $e$.
  In the collapsed graph of groups,
  each edge group of each edge incident to the terminal vertex of $e$ in $\mathcal{G}$
  naturally has edge group contained in $\mathcal{G}_\alpha$.
  Therefore, we choose $\alpha$ to be the $\mathcal{G}_\alpha$-orbit of the direction
  $(\mathcal{G}_e' , e')$ for each oriented edge $e'$ incident to the terminal vertex of $e$ in $\mathcal{G}$.
  If one uses these directions when marking the blown up graph of groups,
  it is clear that the resulting marked graphs of groups are equivalent.
  (The isomorphism recognizes that $\alpha$ is the collapsed edge $e$ and is otherwise ``the identity''.)
\end{proof}

Note that the collections of (unoriented) collapsible forests and (unoriented) ideal forests
are partially ordered; the former by inclusion of sets of edges
and the latter by the rule that $\Phi \le \Phi'$ if there exist orientations
on each so that $\Phi^+ \subset \Phi'^+$.

Since each collapsible forest admits an orientation,
the foregoing proves the following lemma.

\begin{lem}\label{understandinglinks}
  In $K(\mathbb{G})$, the link of a vertex represented by the marked graph of groups $(\mathcal{G},\sigma)$
  is isomorphic to the join of the posets of ideal and collapsible forests in $\mathcal{G}$,
  where ideal forests are required to satisfy the conditions of \Cref{forestatleasttwo}.
  \hfill\qedsymbol
\end{lem}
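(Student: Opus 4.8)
The plan is to unwind the definition of the link of a vertex in a simplicial complex and match the two combinatorial structures on either side. Recall that in $K(\mathbb{G})$ a simplex containing the vertex $\tau = (\mathcal{G},\sigma)$ is a chain of collapses through $\tau$; such a chain splits as a chain of collapses \emph{onto} $\tau$ (i.e.\ $\tau$ is a face of something) together with a chain of collapses \emph{from} $\tau$ (i.e.\ $\tau$ collapses onto something). So the link of $\tau$ decomposes as a join $\Link(\tau) = \Link_{\downarrow}(\tau) * \Link_{\uparrow}(\tau)$, where $\Link_{\downarrow}(\tau)$ is the poset of marked graphs of groups that collapse onto $\tau$, and $\Link_{\uparrow}(\tau)$ is the poset of proper collapses of $\tau$. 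This join decomposition is the reason the statement of \Cref{understandinglinks} is a join, so the first step is simply to record it carefully from the definition of the spine.

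Next I would identify $\Link_{\uparrow}(\tau)$ with the poset of collapsible forests. A proper collapse of $\mathcal{G}$ is obtained by collapsing a subgraph all of whose edges are collapsible; collapsing a connected collapsible subgraph with no innesential valence-one or valence-two vertices appearing is exactly collapsing a maximal tree of collapsible edges, and iterating, a general collapse corresponds to collapsing a \emph{forest} of collapsible edges. The subtlety here is that a collapse is a \emph{homotopy equivalence with a chosen direction}: the definition of a collapse requires one of the two edge-to-vertex inclusions to be surjective, and in the recursive picture of a closed cone, collapsing several edges at once requires a compatible choice of which inclusions are surjective — this is precisely the data of an \emph{orientation} on the collapsible forest (conditions (1)--(3) in the definition of an oriented collapsible forest, with the heavy-vertex analysis explaining when the orientation is unique up to flips). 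So I would argue that the simplices of $\Link_{\uparrow}(\tau)$, i.e.\ iterated proper collapses of $\tau$ ordered by refinement, biject with collapsible forests ordered by inclusion of edge sets, with the caveat that the orientation is auxiliary data that does not affect the underlying forest.

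Dually, I would identify $\Link_{\downarrow}(\tau)$ with the poset of ideal forests satisfying the hypothesis of \Cref{forestatleasttwo}. The marked graphs of groups that collapse onto $\tau$ are exactly the blow-ups $(\mathcal{G}^{\Phi^+}, \sigma^{\Phi^+})$ by oriented ideal forests: \Cref{forestatleasttwo} says that blowing up an oriented ideal forest all of whose ideal edges are nondegenerate (both $\alpha$ and $D_v - \alpha$ have at least two elements, which is forced if $\mathcal{G}^{\Phi^+}$ is to represent a vertex of $K(\mathbb{G})$ by \Cref{atleasttwo}) produces such a vertex, and the preceding lemma provides the converse — given an oriented collapsible forest in a blow-up that collapses onto $\tau$, one recovers an oriented ideal forest in $\mathcal{G}$ realizing it. Again the orientations on the ideal edges are auxiliary (flipping the orientation of an ideal edge $[\alpha,\mathcal{G}_v]$ to $[D_v-\alpha,\mathcal{G}_v]$ does not change the blow-up up to equivalence), so after passing to ideal forests rather than oriented ideal forests we get an honest poset isomorphism, with the partial order matching: $\Phi \le \Phi'$ iff the blow-up by $\Phi$ collapses onto the blow-up by $\Phi'$, which by \Cref{forestatleasttwo} and its predecessor is iff $\Phi^+ \subset \Phi'^+$ for suitable orientations.

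The main obstacle, and the place where care is genuinely needed rather than routine, is the bookkeeping of \emph{orientations} and of \emph{equivalence classes}: one must check that the assignments $\Phi^+ \mapsto (\mathcal{G}^{\Phi^+}, \sigma^{\Phi^+})$ and its inverse descend to the unoriented quotients, that they respect the partial orders in both directions (this is where the ``flip'' equivalence relations on oriented ideal forests and oriented collapsible forests earn their keep), and that compatibility of ideal edges based at possibly different vertices corresponds correctly to simultaneous blow-up — the remarks immediately preceding \Cref{forestatleasttwo} about viewing $[\beta,\mathcal{G}_\beta]$ as an ideal edge in $\mathcal{G}^\alpha$ are exactly what makes the poset of ideal forests well-defined and is what must be invoked. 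Once the join decomposition of the link is in hand and these two identifications are verified, the lemma follows by combining them, since the join of posets on the simplicial side corresponds to the join of the two posets of forests on the combinatorial side.
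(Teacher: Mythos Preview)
Your proposal is correct and follows essentially the same approach as the paper. The paper in fact gives no explicit proof of this lemma---it is marked with a \qedsymbol\ and prefaced by ``the foregoing proves the following lemma''---and your write-up is a faithful and somewhat more detailed unpacking of that ``foregoing'': the join decomposition of the link into up- and down-parts, the identification of the downward part with collapsible forests, and the identification of the upward part with ideal forests via \Cref{forestatleasttwo} and the lemma immediately preceding \Cref{understandinglinks}, with the orientation bookkeeping handled exactly as the paper's equivalence relations were designed to handle it.
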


\paragraph{Links in $L$.}
Although it is useful to understand links in $K(\mathbb{G})$,
when vertex groups of $\mathbb{G}$ are infinite,
the space $K(\mathbb{G})$ is often infinite dimensional,
simply because the ideal edge construction is so flexible.

If $(\mathcal{G},\sigma)$ is a marked graph of groups
representing a vertex of $L(\mathbb{G})$,
observe that if $(\mathcal{G}',\sigma')$ is a collapse of $(\mathcal{G},\sigma)$,
then every edge $e$ of $\mathcal{G}$ whose image in $\mathcal{G}'$ is surviving in $\mathcal{G}'$
is surviving in $\mathcal{G}$, simply because the collapse relation is transitive.
Conversely, we have the following lemma.

\begin{lem}\label{collapseshelters}
  Suppose $\mathcal{G}'$ is obtained from $\mathcal{G}$ by collapsing an edge.
  If $e$ is an edge of $\mathcal{G}$ which is not collapsed in $\mathcal{G}'$,
  then any shelter for $e$ in $\mathcal{G}$ which is not an ascending loop yields a shelter for $e$
  in $\mathcal{G}'$.
\end{lem}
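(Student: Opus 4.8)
The plan is to take the shelter $\mathcal{S}$ for $e$ in $\mathcal{G}$ and explicitly track what happens to it under the collapse map $k\colon \mathcal{G} \to \mathcal{G}'$, checking case by case on the type of shelter (segment, basepointed loop, ascending loop, flat loop) and on the relationship between the collapsed edge, call it $f$, and $\mathcal{S}$. The key observation to establish first is that collapsing $f$ is a homotopy equivalence that is injective on edges other than $f$, so the image $k(\mathcal{S})$ is a well-defined subgraph of groups of $\mathcal{G}'$ containing (the image of) $e$, and the only question is whether $k(\mathcal{S})$ is still a shelter, possibly after passing to a sub-subgraph or re-reading which vertices are essential.

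First I would dispose of the easy case: if the collapsed edge $f$ is not an edge of $\mathcal{S}$, then either $f$ is disjoint from $\mathcal{S}$ — in which case $k$ restricted to $\mathcal{S}$ is an isomorphism onto its image and a shelter is carried to a shelter of the same type — or $f$ shares a vertex $v$ with $\mathcal{S}$, in which case collapsing $f$ merges $v$ with the other endpoint $w$ of $f$ and enlarges the vertex group at $v$ to $\mathcal{G}_v$ (or leaves it $\mathcal{G}_w \cong \mathcal{G}_f$, depending on which inclusion is an isomorphism). I would check that this enlargement of a vertex group can only make a vertex ``more essential'', never less: an edge-to-vertex inclusion that was non-surjective stays non-surjective after the target is enlarged. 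Thus the defining inequalities of a segment shelter (valence-one vertices not inessential) and a basepointed loop shelter (the distinguished vertex's inclusions not surjective) are preserved; for the oriented loop shelters the surjectivity of the distinguished ascending inclusions could in principle be destroyed, but since $f \notin \mathcal{S}$ the inclusions \emph{along} $\mathcal{S}$ are untouched — only the ambient vertex group changes — so an ascending loop could become non-ascending, but it would still be a loop shelter (flat, or still ascending), hence still a shelter. This last point is exactly why the hypothesis excludes ascending loops: if $e$'s only shelter in $\mathcal{G}$ were an ascending loop, collapsing could conceivably turn it flat, but ``flat oriented loop'' is still an oriented loop shelter, so in fact $e$ still survives; the real worry is reserved for the case $f \in \mathcal{S}$.

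So the heart of the argument is the case $f \in \mathcal{S}$, i.e.\ the collapsed edge lies on the shelter. Here $f$ is collapsible, so at least one of its edge-to-vertex inclusions is surjective. For a segment shelter, collapsing an interior edge $f$ either shortens the interval (if $f$'s surjective inclusion is consistent with the shelter's conventions) — and the resulting shorter interval, with the same two non-inessential endpoints, is still a segment shelter containing $e$ — or, if $f$ is an extreme edge of $\mathcal{S}$ whose non-surjective side is the valence-one vertex, collapsing it would kill $\mathcal{S}$; but in that situation I claim $e$ has another shelter or the valence-one vertex of the shorter interval is still non-inessential, which I'd verify using that $f$ being collapsible means its surjective side is the interior side, so the endpoint condition transfers inward. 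For a basepointed loop shelter, collapsing any edge $f \neq$ the two edges at the basepoint leaves a shorter loop with the same basepoint $v$ whose inclusions are unchanged (non-surjective), hence still a basepointed loop shelter; collapsing an edge incident to $v$ would change $v$'s group, and here I would again invoke that enlarging a vertex group preserves non-surjectivity, or else that the loop becomes a segment shelter with $v$ as a non-inessential endpoint. For a flat oriented loop shelter, collapsing any $f$ on it is fine because \emph{every} inclusion on a flat loop is surjective, so the collapsed object is either a shorter flat loop or (if the loop becomes an edge with both endpoints identified) reduces to a vertex — but a single collapsible edge can't collapse an entire loop, so we always have a shorter flat loop shelter.

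**The main obstacle** I anticipate is the bookkeeping in the segment-shelter case around the two valence-one endpoints: a segment shelter's definition requires \emph{both} extreme vertices to be essential (non-inessential), and collapsing an edge of the segment could in principle make an interior vertex become a new extreme vertex that is inessential. Resolving this cleanly requires the observation that the edge being collapsed has a surjective inclusion on one side, which forces the ``newly exposed'' endpoint to have a \emph{non}-surjective inclusion toward the rest of the segment — otherwise that vertex would already have been inessential in $\mathcal{G}$, contradicting the shelter axioms or the standing assumption that $\mathcal{G}$ has no inessential valence-two vertices. Getting this implication precisely right — and stating the analogous fact for the basepointed loop — is where I'd spend the bulk of the care; everything else is transport of structure along an injective-on-edges map.
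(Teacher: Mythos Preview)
You have the right case structure but you've inverted which case is the delicate one, and in doing so you've left a genuine gap.

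When $f \notin \mathcal{S}$ but $f$ shares a vertex $v$ with $\mathcal{S}$ and the collapse properly enlarges the vertex group at $v$ (i.e.\ $\mathcal{G}_v \subsetneq \mathcal{G}_w$), you only verify that \emph{essential} vertices stay essential---``an edge-to-vertex inclusion that was non-surjective stays non-surjective after the target is enlarged.'' That's true, but it's only half the shelter axioms. A segment shelter also requires every \emph{interior} vertex to be \emph{inessential} (both inclusions surjective), and a basepointed or flat loop shelter requires all non-basepoint vertices to be inessential. Enlarging the target of a surjective inclusion destroys surjectivity, so if $v$ happens to be such an interior/non-basepoint vertex, $\mathcal{S}$ is no longer a shelter of the same type after the collapse. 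Your sentence ``the inclusions along $\mathcal{S}$ are untouched --- only the ambient vertex group changes'' is exactly wrong: the inclusions land in that vertex group, so their surjectivity status does change. The paper's fix is to observe that the now-essential vertex $v$ cuts $\mathcal{S}$ into pieces, and the piece containing $e$ is a (shorter) segment shelter or a basepointed loop shelter; this is where the ``not an ascending loop'' hypothesis actually bites, since an ascending loop with a newly-essential vertex need not contain any sub-shelter through $e$.

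Conversely, the case $f \in \mathcal{S}$ that you treat as ``the heart of the argument'' is the routine one: the paper disposes of it in one line (``a simple inspection of the cases shows that $\mathcal{S}$ remains a shelter of the same type''), and your worry about a newly-exposed endpoint being inessential does not arise, for essentially the reason you yourself sketch. So your energy is misallocated: move the careful analysis to the $f \notin \mathcal{S}$, vertex-enlarging subcase, and supply the ``pass to the sub-shelter containing $e$'' step there.
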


\begin{proof}
  Let $f$ be the collapsed edge and $\mathcal{S}$ a shelter containing $e$ in $\mathcal{G}$.
  If $f$ is contained in $\mathcal{S}$,
  a simple inspection of the cases shows that $\mathcal{S}$ remains a shelter of the same type.
  So suppose that $f$ is not an edge of $\mathcal{S}$.
  If no endpoint of $f$ is contained in $\mathcal{S}$,
  then $\mathcal{S}$ projects isomorphically to $\mathcal{G}'$.
  So suppose that $f$ meets $\mathcal{S}$ in one of its endpoints, say $v$.
  Since $f$ is collapsible, if $w$ is its other endpoint,
  we may (slightly abusing notation) say that either $\mathcal{G}_v \le \mathcal{G}_w$
  or conversely.
  If $\mathcal{G}_w \le \mathcal{G}_v$, it is again clear that $\mathcal{S}$ projects isomorphically to $\mathcal{G}'$.
  So suppose that $\mathcal{G}_v$ is properly contained in $\mathcal{G}_w$.
  Then $\mathcal{S}$ is no longer a shelter of the same type.
  However, provided that $\mathcal{S}$ is not an ascending loop shelter,
  the portion containing $e$ will be either a segment shelter
  or a basepointed loop shelter.
\end{proof}

Let us remark that (keeping the notation from the foregoing proof)
in the case where $\mathcal{S}$ is an ascending loop shelter,
it is easy to come up with examples where $e$, which had a shelter in $\mathcal{G}$,
no longer has any shelter in $\mathcal{G}'$.

\begin{cor}\label{nonascendingcollapsesfine}
  If $\mathscr{T}(\mathbb{G})$ is non-ascending
  and $(\mathcal{G},\sigma)$ represents a vertex of $L(\mathbb{G})$,
  any collapse of $(\mathcal{G},\sigma)$ in $K(\mathbb{G})$
  is again in $L(\mathbb{G})$.\hfill\qedsymbol
\end{cor}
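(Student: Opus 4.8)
The plan is to reduce immediately to \Cref{collapseshelters}. A collapse of $(\mathcal{G},\sigma)$ in $K(\mathbb{G})$ is obtained by collapsing a forest of collapsible edges, and this can be performed one edge at a time, so by induction it suffices to treat the case in which $\mathcal{G}'$ is obtained from $\mathcal{G}$ by collapsing a single edge $f$, under the standing assumption that $(\mathcal{G},\sigma)$ represents a vertex of $L(\mathbb{G})$. The inductive hypothesis is maintained along the way because a collapse is a homotopy equivalence, so each intermediate marked graph of groups still lies in the deformation space $\mathscr{T}(\mathbb{G})$, which is therefore still non-ascending.

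So let $e$ be any edge of $\mathcal{G}'$; equivalently, an edge of $\mathcal{G}$ that is not collapsed. Since $(\mathcal{G},\sigma)$ represents a vertex of $L(\mathbb{G})$, the edge $e$ is surviving in $\mathcal{G}$, hence by Clay's characterization of surviving edges (\cite[Proposition 1.13]{Clay}) it is contained in a shelter $\mathcal{S}$ of $\mathcal{G}$. Because $\mathscr{T}(\mathbb{G})$ is non-ascending and $(\mathcal{G},\sigma)$ represents a point of $\mathscr{T}(\mathbb{G})$, no oriented loop shelter of $\mathcal{G}$ is ascending; in particular $\mathcal{S}$ is not an ascending loop. \Cref{collapseshelters} then produces a shelter for $e$ in $\mathcal{G}'$, so $e$ is surviving in $\mathcal{G}'$. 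As $e$ was arbitrary, every edge of $\mathcal{G}'$ is surviving, and hence $(\mathcal{G}',\sigma')$ represents a vertex of $L(\mathbb{G})$; this closes the induction.

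The only point requiring any care — and it is precisely why the non-ascending hypothesis appears — is the single exceptional case in the proof of \Cref{collapseshelters}: when the shelter $\mathcal{S}$ for $e$ is an ascending loop shelter and the collapse enlarges the vertex group at its distinguished vertex, the edge $e$ can genuinely lose every shelter, as the remark following that lemma observes. The non-ascending hypothesis rules this out uniformly across the whole deformation space, and once it is in force the corollary is immediate. I expect no further obstacle; the ``hard part'' is merely recognizing that the induction, together with the invariance of the deformation space under collapse, reduces everything to the single-edge statement already proved in \Cref{collapseshelters}.
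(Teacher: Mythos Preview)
Your proposal is correct and is exactly the argument the paper intends: the corollary is stated with a bare \qedsymbol\ and no proof, because it is meant to be immediate from \Cref{collapseshelters} together with the non-ascending hypothesis. You have simply written out the one-edge-at-a-time induction and the appeal to Clay's shelter characterization that the paper leaves implicit.
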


Next we consider ideal edges.
Because the collapse relation is transitive,
if every edge of $\mathcal{G}$ is surviving,
then $\mathcal{G}^\alpha$ will have every edge surviving
provided that $\alpha$ is surviving in $\mathcal{G}^\alpha$.
Suppose that $\mathcal{S}$ is a shelter containing $\alpha$.
Then by the argument in the proof of \Cref{collapseshelters},
the image of $\mathcal{S}$ after collapsing to  $\mathcal{G}$ remains a shelter of the same type.
Since $\alpha$ is collapsible in $\mathcal{G}^\alpha$, this shelter $\mathcal{S}$ has at least two edges.
Unless $\mathcal{S}$ is a segment shelter whose initial or terminal edge is $\alpha$,
then in $\mathcal{G}$, the image of $\mathcal{S}$
contains a direction in $\alpha$ and a direction in $D_v - \alpha$.
In the contrary case, in $\mathcal{G}$ there is a direction $d \in \alpha$
contained in a segment shelter such that $\mathcal{G}_d$ is a proper subgroup of $\mathcal{G}_v$.
In either case, we say that $\alpha$ \emph{cuts} the collapsed shelter $\mathcal{S}'$ in $\mathcal{G}$.

Now, if $\alpha$ cuts some shelter $\mathcal{S}'$ in $\mathcal{G}$,
it is clear that the collection $\mathcal{S}' \cup \{\alpha\}$
will be a shelter of the same type in $\mathcal{G}^\alpha$.
That is, we have the following lemma.

\begin{lem}\label{cutsshelter}
  An ideal edge $\alpha$ is surviving in $\mathcal{G}^\alpha$ if and only if
  it cuts some shelter in $\mathcal{G}$.\hfill\qedsymbol
\end{lem}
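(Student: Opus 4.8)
The plan is to reduce the statement to Clay's criterion that an edge of a marked graph of groups is surviving if and only if it is contained in a shelter (\cite[Proposition 1.13]{Clay}). Applied to the edge $\alpha$ of $\mathcal{G}^\alpha$, this turns the claim into: $\alpha$ lies in some shelter of $\mathcal{G}^\alpha$ if and only if $\alpha$ cuts some shelter of $\mathcal{G}$. Both implications are essentially present in the discussion preceding the lemma; the work is to organize it and to dispatch a short case analysis over the three types of shelter.

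For the ``if'' direction, suppose $\alpha$ cuts a shelter $\mathcal{S}'$ of $\mathcal{G}$. Recall that blowing up $\alpha$ replaces the vertex $v$ by the edge $\alpha$ with new endpoint $v_\alpha$, moving precisely those edges carrying a direction of $\alpha$ to $v_\alpha$. One then forms $\mathcal{S}' \cup \{\alpha\}$ inside $\mathcal{G}^\alpha$: the edges of $\mathcal{S}'$ realizing the cut — a direction of $\alpha$ on one side of $v$ and a direction of $D_v - \alpha$ on the other, or in the ``contrary'' case a single direction $d \in \alpha$ with $\mathcal{G}_d \lneq \mathcal{G}_v$ — now attach at $v_\alpha$ and at $v$ respectively, with $\alpha$ joining $v$ to $v_\alpha$. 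One checks case by case that the result is a shelter of $\mathcal{G}^\alpha$ of the same combinatorial type as $\mathcal{S}'$: the new valence-two vertex $v_\alpha$ is inessential in it because $\mathcal{G}^\alpha_\alpha \to \mathcal{G}^\alpha_{v_\alpha}$ is the identity, and the remaining vertices behave exactly as in $\mathcal{S}'$. The one point needing care is whether $\mathcal{G}_\alpha = \mathcal{G}_v$: if so, $v$ is also inessential and $\mathcal{S}' \cup \{\alpha\}$ is literally a shelter of the same type; if $\mathcal{G}_\alpha \lneq \mathcal{G}_v$ (which is forced in the ``contrary'' case), $v$ is \emph{not} inessential, so one instead retains only $\alpha$ together with the part of $\mathcal{S}'$ on the $v_\alpha$ side, obtaining a segment shelter with $v$ as a non-inessential free endpoint. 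Either way $\alpha$ lies in a shelter of $\mathcal{G}^\alpha$, so by Clay's criterion it is surviving.

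For the ``only if'' direction, let $\mathcal{S}$ be a shelter of $\mathcal{G}^\alpha$ containing $\alpha$ and collapse $\alpha$ to recover $\mathcal{G}$. Since $\alpha \in \mathcal{S}$, the argument in the proof of \Cref{collapseshelters} shows that the image $\mathcal{S}'$ is a shelter of the same type; and $\mathcal{S} \neq \{\alpha\}$, because $\alpha$ being collapsible rules out a one-edge segment shelter while $\alpha$ having distinct endpoints rules out a one-edge loop shelter, so $\mathcal{S}'$ is nonempty. Now track directions across the collapse, which identifies $v$ with $v_\alpha$: an edge of $\mathcal{S}$ incident to $v_\alpha$ other than $\alpha$ carries, in $\mathcal{G}$, a direction of $\alpha$, whereas an edge of $\mathcal{S}$ incident to the other endpoint $v$ carries a direction of $D_v - \alpha$. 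If $\mathcal{S}$ uses edges on both sides of $\alpha$, then $\mathcal{S}'$ contains a direction of $\alpha$ and a direction of $D_v - \alpha$, i.e.\ $\alpha$ cuts $\mathcal{S}'$. The only remaining possibility is that $\alpha$ is an end edge of a segment shelter $\mathcal{S}$; here \Cref{atleasttwo} gives that $\alpha$ contains at least two directions, so $v_\alpha$ has valence at least three in $\mathcal{G}^\alpha$ and cannot be the free endpoint, forcing $v$ to be the free endpoint. Non-inessentiality of $v$ in $\mathcal{S}$ then says that $\mathcal{G}^\alpha_\alpha = \mathcal{G}_\alpha$ is proper in $\mathcal{G}_v$, and the edge of $\mathcal{S}$ at $v_\alpha$ provides a direction $d \in \alpha$, lying in the segment shelter $\mathcal{S}'$, whose stabilizer $\mathcal{G}_d$ is conjugate into $\mathcal{G}_\alpha$ and hence a proper subgroup of $\mathcal{G}_v$ — exactly the ``contrary'' case of cutting. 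Thus $\alpha$ cuts $\mathcal{S}'$, completing the proof.

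The main obstacle is purely organizational: the bookkeeping in the segment-shelter endpoint case, together with the dichotomy $\mathcal{G}_\alpha = \mathcal{G}_v$ versus $\mathcal{G}_\alpha \lneq \mathcal{G}_v$ that governs whether $v$ survives as an inessential interior vertex or becomes a free endpoint. Once \Cref{atleasttwo} is invoked to control the valence of $v_\alpha$, there is no substantive difficulty, which is why the statement is reasonably flagged as an easy exercise.
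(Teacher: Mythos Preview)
Your approach is the paper's own: the lemma is marked with a \qedsymbol\ precisely because the two paragraphs preceding it already contain both directions, and you have faithfully unpacked that discussion via Clay's criterion and \Cref{collapseshelters}.

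There is one genuine slip in your ``only if'' direction. You invoke \Cref{atleasttwo} to conclude that $v_\alpha$ has valence at least three in $\mathcal{G}^\alpha$ and therefore ``cannot be the free endpoint'' of the segment shelter $\mathcal{S}$. But valence in the ambient graph $\mathcal{G}^\alpha$ says nothing about valence in the \emph{subgraph} $\mathcal{S}$; a vertex of high ambient valence can perfectly well be a valence-one vertex of a shelter. The correct reason $v_\alpha$ cannot be the free endpoint is the one you yourself use in the ``if'' direction: the inclusion $\mathcal{G}^\alpha_\alpha \to \mathcal{G}^\alpha_{v_\alpha}$ is the identity, so if $v_\alpha$ were the valence-one vertex of $\mathcal{S}$ it would be inessential there, contradicting the definition of a segment shelter. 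With that correction (and dropping the unnecessary appeal to \Cref{atleasttwo}), your argument matches the paper's.
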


Notice that a marked graph of groups is \emph{reduced} if and only if each shelter is a single edge.
For reduced marked graphs of groups,
we have the following easy characterizations of ideal edges yielding vertices of $L(\mathbb{G})$.

\begin{lem}\label{easycase}
  Suppose that $\alpha$ is an ideal edge in a reduced marked graph of groups $\mathcal{G}$
  such that $\alpha$ and $D_v - \alpha$ contain at least two elements.
  Then in $\mathcal{G}^\alpha$, the edge $\alpha$ is contained in a shelter which is not an ascending loop
  if and only if
  there exists a direction $d \in \alpha$ with underlying oriented edge $e$
  such that $\mathcal{G}_\alpha = \mathcal{G}_d$
  and no direction with underlying oriented edge $\bar e$ is contained in $\alpha$.
\end{lem}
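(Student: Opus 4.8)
The plan is to prove both implications by using the hypothesis that $\mathcal{G}$ is reduced to cut down to two-edge shelters containing $\alpha$ in $\mathcal{G}^\alpha$, and then to match the three types of shelter against the combinatorics of the blow-up at $v$.

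First I would record what the blow-up construction gives: the edge $\alpha$ runs from $v$ to the new vertex $v_\alpha$, with $\mathcal{G}^\alpha_{v_\alpha} = \mathcal{G}^\alpha_\alpha = \mathcal{G}_\alpha$, the inclusion $\iota_{\bar\alpha}$ equal to the identity of $\mathcal{G}_\alpha$ (so $\alpha$ is collapsible and collapsing it recovers $(\mathcal{G},\sigma)$), and $\iota_\alpha$ equal to the inclusion $\mathcal{G}_\alpha \hookrightarrow \mathcal{G}_v$. Because $\iota_{\bar\alpha}$ is surjective, in any shelter $\mathcal{S}$ containing $\alpha$ the vertex $v_\alpha$ cannot be a valence-one vertex (it would be an inessential valence-one vertex, which no shelter type permits), so $\mathcal{S}$ contains an edge $g \ne \alpha$ incident to $v_\alpha$. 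Now if $\mathcal{S}$ is a shelter containing $\alpha$ that is not an ascending loop, then by the discussion preceding \Cref{cutsshelter} (via the argument of \Cref{collapseshelters}) collapsing $\alpha$ carries $\mathcal{S}$ to a shelter of the same type in $\mathcal{G}$; since $\mathcal{G}$ is reduced that image is a single edge, so $\mathcal{S}$ has exactly two edges, $\alpha$ and one edge $g$ incident to $v_\alpha$. Orienting $g$ away from $v_\alpha$ identifies it with an oriented edge $e \in \st(v)$ of $\mathcal{G}$ supporting a direction $d \in \alpha$, whose initial edge-to-vertex inclusion in $\mathcal{G}^\alpha$ has image $\mathcal{G}_d \le \mathcal{G}_\alpha$.

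For the forward direction I would split on the shape of the two-edge shelter $\mathcal{S} = \{\alpha, e\}$: either it is a segment, with $v_\alpha$ as its interior (valence-two, hence inessential) vertex and $e$ a non-loop edge of $\mathcal{G}$; or it is a circle, in which case $e$ joins $v_\alpha$ to $v$, so $e$ is a loop of $\mathcal{G}$, and $\mathcal{S}$ is a basepointed or a flat loop shelter. In each such case the defining property of the shelter type---$v_\alpha$ inessential (segment, basepointed) or all inclusions surjective (flat loop)---forces $e$'s initial inclusion at $v_\alpha$ onto $\mathcal{G}^\alpha_{v_\alpha} = \mathcal{G}_\alpha$, i.e.\ $\mathcal{G}_\alpha = \mathcal{G}_d$. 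And no direction on $\bar e$ lies in $\alpha$: vacuously when $e$ is non-loop, since $\bar e$ starts at a vertex $\ne v$; and for loops because otherwise $e$'s far endpoint would also be pushed to $v_\alpha$ and $|\mathcal{S}|$ could be neither an interval nor a circle. This is the right-hand side. Conversely, given a witness $(d,e)$ with $\mathcal{G}_\alpha = \mathcal{G}_d$ and no direction on $\bar e$ in $\alpha$, the underlying edge of $e$ is (by reducedness) a one-edge shelter of $\mathcal{G}$, and in $\mathcal{G}^\alpha$ the edge $e$ runs from $v_\alpha$, with initial inclusion onto $\mathcal{G}_d = \mathcal{G}_\alpha = \mathcal{G}^\alpha_{v_\alpha}$, to the $\mathcal{G}$-terminal endpoint of $e$ (which is $v$ if $e$ is a loop, and otherwise a vertex outside $\{v, v_\alpha\}$). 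One then checks directly that $\mathcal{S} = \{\alpha, e\}$---a segment if $e$ is non-loop, a circle if $e$ is a loop---is a shelter that is not an ascending loop: $v_\alpha$ is always inessential in it, and the remaining conditions follow from whether $\mathcal{G}_\alpha = \mathcal{G}_d$ is proper in $\mathcal{G}_v$ (true when $e$ is a non-loop or basepointed-loop edge of $\mathcal{G}$, giving a segment resp.\ basepointed loop shelter) or equals $\mathcal{G}_v$ (when the underlying edge of $e$ is a flat loop of $\mathcal{G}$, in which case all inclusions of $\mathcal{S}$ are surjective and $\mathcal{S}$ is a flat loop shelter).

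The bulk of the work---and the main obstacle---is the bookkeeping of which edge-to-vertex inclusions of $\mathcal{G}^\alpha$ are surjective and its precise match with the three shelter types, especially in the loop cases. The one genuinely delicate point is the possibility that the underlying edge of $e$ is an \emph{ascending} loop of $\mathcal{G}$: then $\{\alpha, e\}$ is itself an ascending loop shelter, so the converse construction does not directly apply. In the situation of \Cref{idealedgeproposition}---the canonical deformation space of a virtually free group or of a free splitting---the deformation space is non-ascending, so $\mathcal{G}$ has no ascending loop edges and this case is vacuous; I would flag handling it in full generality as the step requiring the most care.
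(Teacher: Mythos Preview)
Your argument is correct and follows the same line as the paper's proof, only much more explicitly. The paper's proof is two sentences: it observes that with the stated conditions $\alpha$ cuts the one-edge shelter $\{e\}$ in the reduced graph $\mathcal{G}$, and then remarks that for segment, basepointed loop, and flat loop shelters the equality $\mathcal{G}_\alpha = \mathcal{G}_d$ is exactly what is needed for $\{e,\alpha\}$ to be a shelter of that same type in $\mathcal{G}^\alpha$. Your reduction to a two-edge shelter via \Cref{collapseshelters} and reducedness, followed by a case split on the shelter type of $\{e\}$, is precisely this argument spelled out.

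Your flagged concern about the ascending-loop case is well taken and in fact matches the paper's own handling: the paper's proof explicitly restricts attention to ``segment shelters and basepointed or flat loop shelters,'' so it too does not treat the case where the underlying edge of $e$ is an ascending loop in $\mathcal{G}$. As you note, the lemma is invoked only in non-ascending settings (the canonical deformation space of a virtually free group in \Cref{idealedgeproposition}, and free splittings in \Cref{freeproductlemma}), where this case is vacuous; the ascending situation is handled separately in \Cref{ascendingeasycase}. So your caveat is appropriate rather than a gap.
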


\begin{proof}
  In all cases, we see immediately that $\alpha$ cuts the shelter $\{e\}$.
  For segment shelters and basepointed or flat loop shelters,
  the assumption $\mathcal{G}_\alpha = \mathcal{G}_d$
  is necessary and sufficient to ensure that $\{e, \alpha\}$ is a shelter.
\end{proof}

\begin{lem}\label{ascendingeasycase}
  Suppose that the ideal edge $\alpha$ cuts an ascending loop shelter $\mathcal{S}$
  in a marked graph of groups $(\mathcal{G},\sigma)$ corresponding to a vertex of $L(\mathbb{G})$.
  Then if $\alpha$ and $D_v - \alpha$ contain at least two elements,
  we have that $\alpha$ is surviving in $\mathcal{G}^\alpha$.
\end{lem}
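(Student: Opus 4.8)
The plan is to apply the criterion of Clay recalled above---an edge of a graph of groups is surviving precisely when it is contained in a shelter---and to exhibit such a shelter of $\mathcal{G}^\alpha$ explicitly, namely $\mathcal{S}\cup\{\alpha\}$ equipped with a suitable orientation. First I would choose an orientation of the ascending loop shelter $\mathcal{S}$, writing its edges as a cyclic sequence $f_1,\ldots,f_k$ of oriented edges all of whose \emph{initial} edge-to-vertex group inclusions $\iota_{f_i}$ are surjective; this is possible because $\mathcal{S}$ is an oriented loop shelter. After relabeling we may assume $v=o(f_1)=t(f_k)$, so that the two edge-ends of $\mathcal{S}$ at $v$ are $f_1$ and $\bar f_k$ in $\st(v)$. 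From the definition of ``$\alpha$ cuts $\mathcal{S}$''---which asks that $\mathcal{S}$ be the collapse of a shelter of $\mathcal{G}^\alpha$ containing $\alpha$---exactly one of these edge-ends, call it $e_\alpha$, supports a direction lying in $\alpha$, while the other, $e_0$, supports only directions in $D_v-\alpha$. Hence in $\mathcal{G}^\alpha$ the blow-up of $\alpha$ reattaches $e_\alpha$ to the new vertex $v_\alpha$, leaves $e_0$ at $v$, and adds the edge $\alpha$ joining $v$ to $v_\alpha$; thus $\mathcal{S}\cup\{\alpha\}$ is again topologically a circle. (That $(\mathcal{G}^\alpha,\sigma^\alpha)$ is a genuine vertex of $K(\mathbb{G})$ follows from \Cref{forestatleasttwo} applied to the ideal forest $\{\alpha\}$, using the hypothesis that $\alpha$ and $D_v-\alpha$ each contain at least two elements.)

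Next I would orient this circle so that it restricts to $f_1,\ldots,f_k$ on $\mathcal{S}$: as a cyclic sequence of oriented edges it is $f_1,\ldots,f_k,\bar\alpha$ when $e_\alpha=\bar f_k$, and $\alpha,f_1,\ldots,f_k$ when $e_\alpha=f_1$. It then remains to verify that every initial edge-to-vertex inclusion for this orientation is surjective, which will identify $\mathcal{S}\cup\{\alpha\}$ as an oriented loop shelter of $\mathcal{G}^\alpha$ containing $\alpha$ and finish the proof. The blow-up leaves the initial inclusion of each $f_i$ untouched, except for that of $f_1$ in the case $e_\alpha=f_1$; so the only inclusions needing attention are that of the edge $\alpha$ itself and, in the case $e_\alpha=f_1$, that of $f_1$. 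When $\alpha$ is traversed as $\bar\alpha$ (with initial vertex $v_\alpha$) its initial inclusion $\mathcal{G}^\alpha_\alpha\to\mathcal{G}^\alpha_{v_\alpha}$ is by construction the identity, hence surjective; this settles the case $e_\alpha=\bar f_k$.

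The remaining case $e_\alpha=f_1$ is the step I expect to be the main obstacle. Here the circle is oriented $\alpha,f_1,\ldots,f_k$, so $\alpha$ is traversed from $v$ to $v_\alpha$---its initial inclusion being the honest inclusion $\mathcal{G}_\alpha\hookrightarrow\mathcal{G}_v$---and $f_1$ is traversed out of $v_\alpha$, its initial inclusion $\mathcal{G}^\alpha_{f_1}\to\mathcal{G}^\alpha_{v_\alpha}=\mathcal{G}_\alpha$ having image the stabilizer $\mathcal{G}_{d_0}$ of the direction $d_0\in\alpha$ on $f_1$ chosen in the blow-up. I would observe that, $\iota_{f_1}$ being surjective, the coset space $\mathcal{G}_v/\iota_{f_1}(\mathcal{G}_{f_1})$ is a point, so $f_1$ supports a \emph{unique} direction at $v$, whose stabilizer is all of $\mathcal{G}_v$; that direction is necessarily $d_0$, and $d_0\in\alpha$ forces $\mathcal{G}_v=\mathcal{G}_{d_0}\le\mathcal{G}_\alpha\le\mathcal{G}_v$, so $\mathcal{G}_\alpha=\mathcal{G}_v$. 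Then the two inclusions just described are both identity maps, hence surjective, completing the verification. This is precisely what separates \Cref{ascendingeasycase} from \Cref{easycase}: for an ascending loop shelter no side condition like $\mathcal{G}_\alpha=\mathcal{G}_d$ is needed, because the only way the new edge $\alpha$ could carry a non-surjective initial inclusion is already ruled out by the surjectivity of the initial inclusions along $\mathcal{S}$. A final line would dispose of the degenerate case $k=1$, where $\mathcal{S}$ is a single loop edge, $\mathcal{S}\cup\{\alpha\}$ is a bigon, and the same argument applies verbatim.
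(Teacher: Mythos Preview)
Your proof is correct and follows essentially the same route as the paper's: both arguments show that $\mathcal{S}\cup\{\alpha\}$ is an oriented loop shelter in $\mathcal{G}^\alpha$ by splitting into the two cases according to which of the two edge-ends of $\mathcal{S}$ at $v$ carries a direction in $\alpha$, and both use the key observation that when the direction on the outgoing edge $f_1$ lies in $\alpha$, surjectivity of $\iota_{f_1}$ forces $\mathcal{G}_\alpha=\mathcal{G}_v$. Your write-up is more explicit than the paper's about orienting the new circle and checking the new initial inclusions one by one.

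One small caveat: your parenthetical gloss on ``$\alpha$ cuts $\mathcal{S}$'' as meaning ``$\mathcal{S}$ is the collapse of a shelter of $\mathcal{G}^\alpha$ containing $\alpha$'' is not the paper's \emph{definition}---that is rather the content of \Cref{cutsshelter}, and taking it as the definition would render the lemma tautological. The paper's actual definition (for a loop shelter) is the intrinsic direction condition: $\mathcal{S}$ contains a direction at $v$ in $\alpha$ and one in $D_v-\alpha$. Fortunately you immediately pass to exactly that condition and argue from it, so the slip is purely terminological and does not affect the substance of your argument.
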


It is worth remarking that we do not need $\mathcal{G}$ to be reduced in this case.

\begin{proof}
  The point is that in $\mathcal{G}^\alpha$, the collection $\mathcal{S} \cup \{\alpha\}$
  is a shelter. Indeed, because $\alpha$ cuts $\mathcal{S}$,
  we see that there are a pair of directions $d$ and $d'$,
  with one, say $\mathcal{G}_d$, satisfying $\mathcal{G}_d = \mathcal{G}_v$
  with one contained in $\alpha$ and the other in $D_v - \alpha$.
  If $d \in \alpha$, then it is clear that $\mathcal{S} \cup \{\alpha\}$
  satisfies the definition of an ascending loop shelter because both edge-to-vertex group inclusions
  in $\mathcal{G}^\alpha$ involving $\alpha$ are surjective.
  In the contrary orientation, the important edge-to-vertex group inclusions
  are $\mathcal{G}_d \to \mathcal{G}_v$, which is surjective by assumption,
  and $\mathcal{G}_{\bar{\alpha}} \to \mathcal{G}_{v_\alpha}$, which is surjective by construction.
  Thus we see that $\mathcal{S}$ is a shelter for $\alpha$.
\end{proof}

In general, the conditions in \Cref{easycase}, while necessary
for $\alpha$ to be contained in a shelter which is not an ascending loop,
are not sufficient
to show that $\alpha$ cuts a shelter in $\mathcal{G}$.

\begin{lem}\label{freeproductlemma}
  When $G$ is a free product and reduced trees in $K(\mathbb{G})$ are free splittings,
  then a marked graph of groups $\tau = (\mathcal{G},\sigma)$ has every edge surviving
  just when it satisfies the following conditions.
  \begin{enumerate}
  \item Edge groups of $\mathcal{G}$ are trivial.
  \item Each valence-one and valence-two vertex of $\mathcal{G}$ has nontrivial vertex group.
  \item If an (open) edge $e$ separates $\mathcal{G}$,
    each component of the complement contains a vertex with nontrivial vertex group.
  \end{enumerate}
\end{lem}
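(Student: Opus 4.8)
The plan is to combine two ingredients: Clay's criterion, quoted above, that an edge of $\mathcal{G}$ is surviving if and only if it lies in a shelter; and the elementary observation that, by the definition of a collapse, collapsing an edge leaves the edge groups of the uncollapsed edges unchanged. Throughout I take $\tau=(\mathcal{G},\sigma)$ to represent a vertex of $K(\mathbb{G})$, so in particular it has no inessential valence-one or valence-two vertices. The proof then splits into necessity of (1)--(3) and sufficiency.

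First I would prove necessity. For (1): if an edge $e$ is surviving then, by definition, it is uncollapsed in some reduced collapse $\mathcal{G}'$ of $\mathcal{G}$; by hypothesis $\mathcal{G}'$ is a free splitting, so $\mathcal{G}'_e$ is trivial, and since the collapses between $\mathcal{G}$ and $\mathcal{G}'$ do not alter $\mathcal{G}_e$ it was already trivial. Running this over all edges gives (1). Granting (1), a vertex is inessential precisely when its vertex group is trivial, so the standing prohibition on inessential low-valence vertices is exactly (2). For (3): a separating edge lies in no embedded circle and hence in no loop shelter, so if $e$ is surviving it lies in a segment shelter $\mathcal{S}$; but starting from its endpoint in a chosen component $C$ of the complement of $e$, the arc $\mathcal{S}$ can only exit $C$ through $e$, which it has already traversed, so its far endpoint lies in $C$, and were every vertex group in $C$ trivial this endpoint would be inessential in $\mathcal{S}$ -- contrary to the definition of a segment shelter. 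Applying this to both endpoints of $e$ yields (3).

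For sufficiency I would assume (1)--(3), fix an edge $e$, and produce a shelter containing it by cases. If $e$ is a loop at a vertex $v$, then $\{e\}$ is a basepointed loop shelter when $\mathcal{G}_v\neq 1$ and a flat oriented loop shelter when $\mathcal{G}_v=1$. If $e$ is a non-loop edge whose two endpoints both carry nontrivial vertex group, then $\{e\}$ is a segment shelter. Otherwise $e=(a,b)$ is a non-loop edge with, say, $\mathcal{G}_a=1$, and I split on whether $e$ separates. If it separates, say into $C_1\ni a$ and $C_2\ni b$, then for each $i$ such that the endpoint of $e$ in $C_i$ has trivial vertex group I prune: let $C_i^0$ be the component of the subgraph of $C_i$ obtained by deleting every vertex with nontrivial vertex group that contains that endpoint, note $C_i^0\neq C_i$ since (3) supplies a nontrivial-group vertex of $C_i$ lying outside $C_i^0$, and run an embedded path inside $C_i^0$ and then across a boundary edge to a vertex $u_i$ with $\mathcal{G}_{u_i}\neq 1$ (on a side whose endpoint is already nontrivial, take the trivial path there). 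Concatenating with $e$ gives an embedded arc from $u_1$ to $u_2$ with nontrivial-group endpoints and trivial-group interior, i.e.\ a segment shelter through $e$.

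It remains to handle a non-separating non-loop edge $e=(a,b)$. Here I would choose an embedded path $p$ from $a$ to $b$ in the complement of $e$, so $\gamma=e\cup p$ is an embedded circle through $e$, and examine the set $S$ of vertices of $\gamma$ with nontrivial vertex group. If $|S|\le 1$, then $\gamma$ itself is a loop shelter -- flat oriented if $S=\varnothing$, basepointed with distinguished vertex the element of $S$ otherwise. If $|S|\ge 2$, I take $v$ and $w$ to be the last and the first vertices of $S$ encountered while traversing $p$ from $a$ to $b$; then $v\neq w$, and the sub-arc of $\gamma$ from $v$ to $w$ passing through $e$ has nontrivial-group endpoints and trivial-group interior, so it is a segment shelter through $e$. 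The routine part throughout is checking the shelter axioms for the subgraphs produced; the hard part will be the bookkeeping in the sufficiency direction -- making the pruning step extract from (3) an honest path running through only trivial-group vertices to a nontrivial-group vertex, and making the last/first-vertex trick convert a cycle carrying several nontrivial-group vertices into a usable segment shelter through the prescribed edge $e$.
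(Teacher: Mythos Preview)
Your proof is correct. Both your argument and the paper's use Clay's shelter criterion and follow the same overall necessity-then-sufficiency structure, with conditions~(2) and~(3) handled in essentially the same way. The two proofs diverge in the treatment of condition~(1): the paper argues \emph{upward} from a reduced free splitting, invoking \Cref{easycase} to show that any ideal edge blown up from a reduced tree must have trivial stabilizer (since direction stabilizers in a free splitting are trivial), whereas you argue \emph{downward}, observing directly that a surviving edge persists in some reduced collapse---which by hypothesis is a free splitting---and that collapses do not alter the edge groups of uncollapsed edges. Your route is shorter and more self-contained, since it avoids the ideal-edge machinery entirely; the paper's route, on the other hand, fits its broader program of understanding links via ideal edges. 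For sufficiency the paper simply asserts the conditions are ``clearly sufficient,'' while you supply an explicit shelter for each edge via a case analysis (loop, segment, separating, non-separating); your construction is correct and fills in a detail the paper leaves to the reader.
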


\begin{proof}
  The deformation space $K(\mathbb{G})$ is non-ascending,
  so each loop shelter is either basepointed or flat.
  Since edge groups in reduced marked graphs of groups are trivial,
  \Cref{easycase} shows that if an ideal edge $\alpha$
  in a reduced marked graph of groups $\mathcal{G}$ is surviving in $\mathcal{G}^\alpha$,
  then $\mathcal{G}_\alpha$ must be trivial,
  proving the necessity of the first condition.
  The second condition is true in $K(\mathbb{G})$ so must be true in $L(\mathbb{G})$.
  The third condition is necessary for the edge $e$ to be contained in a segment shelter,
  as it must when it is separating.
  All three conditions together are clearly sufficient as well.
\end{proof}

Finally we come to our main result of this section.

\begin{prop}\label{idealedgeproposition}
  Suppose that the graph of groups $\mathbb{G}$ has finite edge groups
  and that $(\mathcal{G},\sigma)$ is a graph of groups in $L(\mathbb{G})$.
  Let $(\alpha,\mathcal{G}_\alpha)$ be an ideal edge in $\mathcal{G}$
  based at a vertex $v$ of $\mathcal{G}$.
  We have that the blown-up graph groups $(\mathcal{G}^\alpha,\sigma^\alpha)$
  is also in $L$ if and only if the following conditions are satisfied.
  \begin{enumerate}
  \item The sets $\alpha$ and $D_v - \alpha$ each have at least two elements.
  \item The set of directions $d \in \alpha$ such that $\mathcal{G}_d = \mathcal{G}_\alpha$
    is nonempty, call it $D(\alpha)$.
  \item Either there must be an embedded circle $S \subset |\mathcal{G}|$ containing $v$ as a vertex
    such that
    \begin{enumerate}
    \item if $e$ and $e'$ are the (distinct) edges in $S$ incident to $v$,
      one of them, say $e$, supports a direction $d \in \alpha$,
      but no direction supported by the other, say $e'$ supports a direction in $\alpha$, and
    \item each edge group in $\mathcal{G}$ for each edge of $S$ is isomorphic
      to $\mathcal{G}_\alpha$,
    \end{enumerate}
    or if this condition does not hold,
    some direction $d \in D(\alpha)$ is contained in a segment shelter
    (this must be the case if $\alpha$ separates $\mathcal{G}^\alpha$).
    Additionally if $\mathcal{G}_\alpha = \mathcal{G}_v$
    and there is no circle $S$ as above, we require also that
    some direction in $D(D_v - \alpha)$ is contained in a segment shelter.
  \end{enumerate}
\end{prop}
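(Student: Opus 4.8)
The plan is to reduce the statement to a combinatorial question about shelters and then run a case analysis on the three shelter types, using finiteness of edge groups throughout. First I would record the standing reduction. By \Cref{forestatleasttwo}, condition~(1) is exactly what makes $(\mathcal{G}^\alpha,\sigma^\alpha)$ represent a vertex of $K(\mathbb{G})$; and assuming~(1), since every edge of $\mathcal{G}$ is surviving and the collapse relation is transitive, every edge of $\mathcal{G}^\alpha$ other than $\alpha$ is automatically surviving, so $(\mathcal{G}^\alpha,\sigma^\alpha)$ lies in $L(\mathbb{G})$ if and only if $\alpha$ is surviving in $\mathcal{G}^\alpha$, which by \Cref{cutsshelter} happens if and only if $\alpha$ cuts some shelter in $\mathcal{G}$. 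So the content of the proposition is the equivalence, under~(1): \emph{$\alpha$ cuts a shelter in $\mathcal{G}$ if and only if conditions~(2) and~(3) hold.} Before analyzing this I would spell out the rigidity forced by finite edge groups: going around any loop shelter all edge groups on it are conjugate and hence of equal order, so no loop shelter is ascending and every loop shelter is basepointed or flat; moreover in a segment shelter or a basepointed loop shelter the edge groups are all conjugate to one finite group $H$, the endpoint (respectively, distinguished) vertex groups properly contain a copy of $H$, and every other vertex group equals such a copy, while in a flat loop shelter all edge and vertex groups are conjugate to one finite group. This lets one promote inclusions such as $\mathcal{G}_d \le \mathcal{G}_\alpha$ to equalities whenever the orders match, and it is used repeatedly below.

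For necessity, suppose $\alpha$ cuts a shelter $\mathcal{S}'$ of $\mathcal{G}$, so $\mathcal{S} := \mathcal{S}' \cup \{\alpha\}$ is a shelter of $\mathcal{G}^\alpha$ of the same type (as noted just before \Cref{cutsshelter}). The vertex $v_\alpha$ is an endpoint of $\alpha$, hence a vertex of $\mathcal{S}$, and the inclusion $\mathcal{G}^\alpha_\alpha \to \mathcal{G}^\alpha_{v_\alpha} = \mathcal{G}_\alpha$ is an isomorphism by construction; so $v_\alpha$ can be neither a valence-one vertex of a segment shelter nor the distinguished vertex of a basepointed loop shelter, and is therefore an inessential valence-two vertex of $\mathcal{S}$. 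Its unique other $\mathcal{S}$-edge $e$ then has $\mathcal{G}^\alpha_e \to \mathcal{G}_\alpha$ onto, i.e.\ the direction $d$ on $e$ pointing into $\alpha$ satisfies $\mathcal{G}_d = \mathcal{G}_\alpha$, so $D(\alpha) \ne \varnothing$, which is~(2). For~(3) I would split on the type of $\mathcal{S}$. If $\mathcal{S}$ is a flat loop shelter, collapsing $\alpha$ carries it to an embedded circle $S \subset |\mathcal{G}|$ through $v$, whose two edges at $v$ are the former $\mathcal{S}$-neighbours of $\alpha$: one is $e$ (carrying $d \in D(\alpha) \subset \alpha$), the other is the $\mathcal{S}$-edge at $v$ (which therefore carries no direction in $\alpha$), giving~3(a), while flatness and finiteness make every edge group on $S$ a conjugate of $\mathcal{G}_\alpha$, giving~3(b). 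If $\mathcal{S}$ is a segment shelter with $\alpha$ its initial or terminal edge, then $v$ is its valence-one endpoint and $\mathcal{G}_\alpha = \mathcal{G}_d$ is a proper subgroup of $\mathcal{G}_v$; collapsing $\alpha$ leaves a segment shelter of $\mathcal{G}$ with endpoint $v$ containing $d \in D(\alpha)$, and as $\mathcal{G}_\alpha \ne \mathcal{G}_v$ the extra clause is vacuous. In the remaining cases---$\mathcal{S}$ a segment shelter with $\alpha$ interior, or a basepointed loop shelter---$v$ is a non-special, hence inessential, vertex of $\mathcal{S}$, forcing $\mathcal{G}^\alpha_\alpha \to \mathcal{G}_v$ onto, i.e.\ $\mathcal{G}_\alpha = \mathcal{G}_v$; collapsing $\alpha$ gives a shelter $\mathcal{S}'$ of $\mathcal{G}$ through $v$ containing $d \in D(\alpha)$ (on $e$) and the direction $d'$ on the other $\mathcal{S}$-edge at $v$, which lies in $D_v - \alpha$ and, by inessentiality of $v$, has $\mathcal{G}_{d'} = \mathcal{G}_v$, so $d' \in D(D_v - \alpha)$; if $\mathcal{S}$ is a segment shelter then $\mathcal{S}'$ is one and both clauses hold, while if $\mathcal{S}$ is basepointed then, since $v$ is not its distinguished vertex, cutting the loop $\mathcal{S}'$ at that distinguished vertex yields a segment shelter still containing both $d$ and $d'$.

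For sufficiency, assume~(1),~(2),~(3) and build a shelter of $\mathcal{G}$ that $\alpha$ cuts. If the circle $S$ of~3(a)--(b) exists, then by~3(b) and finiteness $S$ is a loop shelter of $\mathcal{G}$ (flat, or basepointed with distinguished vertex chosen off the $v$-arc), the edge $e$ of $S$ at $v$ carries---again by finiteness---a direction in $D(\alpha)$, and by~3(a) the other $S$-edge at $v$ carries no direction in $\alpha$; one checks that reattaching $e$ to $v_\alpha$ and inserting $\alpha$ turns $S$ into a loop shelter of $\mathcal{G}^\alpha$ (flat when $\mathcal{G}_\alpha = \mathcal{G}_v$, basepointed with distinguished vertex $v$ otherwise), so $\alpha$ cuts $S$. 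Otherwise~(3) supplies a segment shelter of $\mathcal{G}$ containing a direction $d \in D(\alpha)$ and, when $\mathcal{G}_\alpha = \mathcal{G}_v$, also a segment shelter containing a direction $d' \in D(D_v - \alpha)$; inserting the collapsible edge $\alpha$ at the right place---at the endpoint $v$ when $\mathcal{G}_\alpha \subsetneq \mathcal{G}_v$, and otherwise splicing the two segment shelters together through $\alpha$ at $v$ (the map $\mathcal{G}_\alpha \to \mathcal{G}_v$ being onto keeps $v$ inessential)---produces a segment shelter of $\mathcal{G}^\alpha$ containing $\alpha$, so $\alpha$ cuts a shelter of $\mathcal{G}$.

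The step I expect to be the main obstacle is the basepointed-loop-versus-segment bookkeeping in~(3) together with the splicing just mentioned: when $\mathcal{G}_\alpha = \mathcal{G}_v$ one must exhibit the two required directions---one in $D(\alpha)$, one in $D(D_v - \alpha)$---inside a \emph{single} honest shelter through $v$, so that $\alpha$ genuinely cuts it rather than merely abutting two unrelated shelters, which may force one to choose the segment shelters minimally or pass to a sub-shelter when their tails overlap; and one must then check, case by case and with care about orientations and edge-to-vertex inclusions, that the blown-up configuration really is a shelter of the declared type. Everything else is the chain of reductions above, the finite-edge-group rigidity, and \Cref{nonascendingcollapsesfine} to keep collapses inside $L$.
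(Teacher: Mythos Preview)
Your proposal is correct and follows essentially the same route as the paper: reduce to the question of whether $\alpha$ cuts a shelter via \Cref{cutsshelter}, then do a case analysis on shelter type for necessity and build the requisite shelter for sufficiency. Your write-up is considerably more detailed than the paper's, which dispatches both directions in a short paragraph; in particular, the splicing concern you flag (when $\mathcal{G}_\alpha = \mathcal{G}_v$, combining the two segment shelters into one that $\alpha$ genuinely cuts) is handled in the paper by the single phrase ``paste together,'' so your caution there is well placed but not an obstruction.
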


Before turning to the proof, let us make a couple remarks.
Notice first that finite groups cannot properly contain themselves
(they are \emph{co-Hopfian}),
and notice as well that if $(\mathcal{G},\sigma)$ is a marked graph of groups in $L(\mathbb{G})$,
then $\mathcal{G}$ has finite edge groups.
Indeed, if $\mathbb{G}$ is not reduced,
we may pass to a reduced collapse of $\mathbb{G}$;
it determines the same deformation space.
Suppose $\mathcal{G}'$ is a reduced collapse of $\mathcal{G}$.
Any homotopy equivalence $\mathbb{G} \to \mathcal{G}'$ or vice versa
must not collapse any edge, since the graphs of groups in question are reduced.
If $\mathcal{G}'$ had an infinite edge group,
in the Bass--Serre tree of $\mathbb{G}$, we would have some infinite group
fixing two distinct points; hence stabilizing some edge, a contradiction.
Therefore every edge with infinite edge group fails to be surviving.

From these considerations, we conclude that $L$ is non-ascending.

\begin{proof}
  Observe that the first condition is necessary and sufficient for
  $(\mathcal{G}^\alpha, \sigma^\alpha)$
  to represent a vertex of $K(\mathbb{G})$.
  The second condition is clearly necessary
  by considering a shelter for $\alpha$ in $\mathcal{G}^\alpha$.
  Indeed, collapsing $\alpha$ in this shelter yields a shelter of the same type,
  from which we can see that one of the options in the third condition holds.

  What's more these conditions are also sufficient.
  If the ``embedded circle'' option holds,
  then $\alpha$ does not separate $\mathcal{G}^\alpha$,
  this circle $S$ is a flat loop shelter or a basepointed loop shelter and $\alpha$ cuts it.
  If no such circle exists,
  then the hypothesized segment shelters paste together (if $\mathcal{G}_v = \mathcal{G}_\alpha$)
  to a segment shelter that $\alpha$ cuts.
\end{proof}

\section{The proof of \texorpdfstring{\Cref{mainpositiveresult}}{Theorem A}}\label{mainpositivesection}
The purpose of this section is to prove \Cref{mainpositiveresult}.
That is, let $G = A * B * \mathbb{Z}$,
where $A$ and $B$ are nontrivial finite groups,
and let $\mathbb{G}$ be a free splitting of $G$
whose associated free factor decomposition $\mathscr{A}$ of $G$
is of the form $\{[A], [B]\}$.
We sometimes write $L(G,\mathscr{A})$ for $L(\mathbb{G})$,
or just $L$ when it is clear from context what we mean.

When $G = A_1 * \cdots * A_n * F_k$ is a free product with associated free factor decomposition
$\mathscr{A} = \{[A_1],\ldots,[A_n]\}$
and the groups $A_i$ are finite (or more generally freely indecomposable and not infinite cyclic),
the deformation space $\mathscr{T}(\mathbb{G})$ is canonical,
and $\out(G)$ acts properly discontinuously and cocompactly on $L(\mathbb{G}) = L(F,\mathscr{A})$
(when the $A_i$ are finite; more generally the action is cocompact but not properly discontinuous).

In the case $G = A*B*\mathbb{Z}$,
the complex $L$ has dimension $2$,
and there are at most $7$ combinatorial types of graphs of groups determining vertices of $L$
(there are only $5$ if $A \cong B$);
one maximal with respect to expansion,
and three each of intermediate and minimal.
They are listed in \Cref{typesfigure}.

\begin{figure}
    \begin{center}
	    \def\svgwidth{\columnwidth}
	        \import{./figures/}{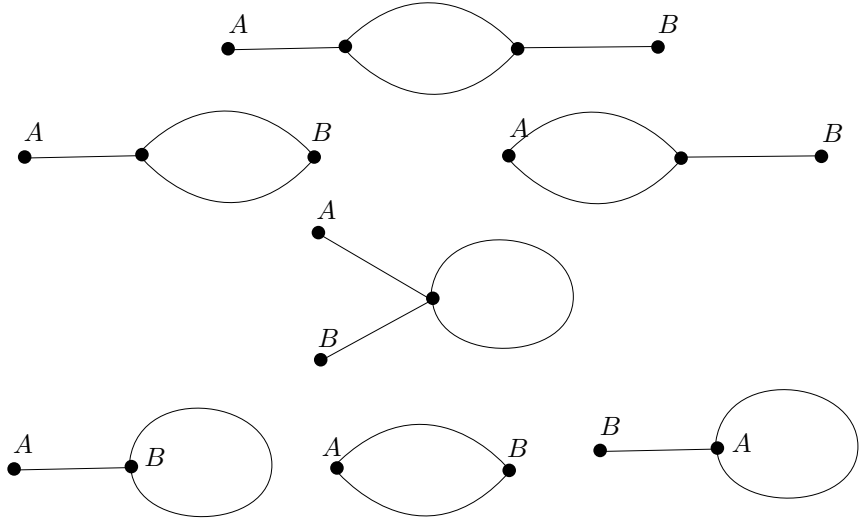}
	
    \end{center}
    \caption{The combinatorial types of graphs of groups occuring in $L(F)$.}\label{typesfigure}
\end{figure}

\subsection{The case of \texorpdfstring{$C_2*C_2*\mathbb{Z}$}{C2 * C2 * Z}}\label{specialcasesection}
The purpose of this subsection is to prove \Cref{specialcase} below;
the whole section is given to its proof.

Consider the following Coxeter group
\[
  W = \langle x,y,z,w \mid x^2 = y^2 = z^2 = w^2 = (xy)^2 = (xz)^4 = (yz)^4 = (xw)^4 = (yw)^4 = 1 \rangle.
\]
The defining graph of this Coxeter group has four vertices and five edges:
if $\{x,y,z,w\}$ are the vertices,
there is an edge between every pair of vertices except not $\{z,w\}$.
The edge $\{x,y\}$ has label $2$,
while the other four edges are labeled $4$.
For the CAT$(0)$ metric produced in the proof of \Cref{mainpositiveresult}
in the particular case that $A = B = C_2$,
we observed that the complex $L$ bears a striking resemblance to the Davis--Moussong complex of $W$.

Indeed, letting $G = \langle a, b, t \mid a^2 = b^2 = 1 \rangle$,
the group $\out(G)$ is generated by (the outer classes of) the following automorphisms
defined by their action on the basis $\{a,b,t\}$
\[
    \sigma \begin{dcases}
        a \mapsto b \\
        b \mapsto a \\
        t \mapsto t
    \end{dcases}
    \qquad
    \tau \begin{dcases}
        a \mapsto a \\
        b \mapsto b \\
        t \mapsto t^{-1}
    \end{dcases}
    \qquad
    L_a \begin{dcases}
        a \mapsto a \\
        b \mapsto b \\
        t \mapsto at
    \end{dcases}
    \qquad
    R_b \begin{dcases}
        a \mapsto a \\
        b \mapsto b \\
        t \mapsto tb
    \end{dcases}
    \qquad
    \chi^b_t \begin{dcases}
        a \mapsto a \\
        b \mapsto t^{-1}bt \\
        t \mapsto t.
    \end{dcases}
\]
(A reader familiar with, for example,
Gilbert's \cite{Gilbert} finite presentation of $\aut(G)$
might find it enjoyable to verify this claim.)
It is an easy exercise to verify that the the map
\[
    \Phi \begin{dcases}
        x \mapsto L_a \\
        y \mapsto R_b \\
        z \mapsto \tau \\
        w \mapsto \tau(\chi_t^b)^2
    \end{dcases}
\]
extends to a well-defined homomorphism $\Phi\colon W \to \out(G)$.
It is also not hard to show that the conjugation action of the generators of $\out(G)$
on the image of the generators of $\im(\Phi)$ yields elements of $\im(\Phi)$,
so $\im(\Phi)$ is a normal subgroup of $\out(G)$.
Since $\tau$, $L_a$ and $R_b$ are contained in $\im(\Phi)$,
the quotient $\out(G)/\im(\Phi)$ is generated by $\sigma$ and $\chi_t^b$,
which have order at most $2$ in the quotient.
Since $\sigma \chi_t^b \sigma \chi_t^b$ is inner,
the index of $\im(\Phi)$ in $\out(G)$ is at most $4$.
In fact, using the geometry of $L$, we prove the following.

\begin{thm}\label{specialcase}
  With notation as above, the homomorphism $\Phi\colon W \to \out(G)$ is injective
  and has image an index-$4$ subgroup of $\out(G)$.
  Moreover, the Davis--Moussong complex for $W$ may be identified with $L$.
\end{thm}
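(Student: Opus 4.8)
The plan is to prove all three assertions --- injectivity of $\Phi$, the index $4$, and the identification of complexes --- by first establishing the last one in an equivariant form and then reading off the other two. Precisely, I would construct a $W$-equivariant isometry between the Davis--Moussong complex $\Sigma_W$ and the complex $L = L(C_2*C_2*\mathbb{Z},\mathscr{A})$ equipped with the piecewise-Euclidean CAT$(0)$ metric produced in the proof of \Cref{mainpositiveresult}, where $W$ acts on $L$ through $\Phi$. Granting this, injectivity of $\Phi$ is automatic: the $W$-action on $\Sigma_W$ is faithful, so a nontrivial element of $\ker\Phi$ would act trivially on $L\cong\Sigma_W$ yet nontrivially on $\Sigma_W$. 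The ``index $4$'' statement then follows from the bound $[\out(F):\im\Phi]\le 4$ recorded before the theorem together with a lower-bound argument sketched at the end.

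First I would line up the combinatorics on both sides. On the $L$ side I recall from \Cref{typesfigure} and the proof of \Cref{mainpositiveresult} the five combinatorial types of marked graphs of groups for $C_2*C_2*\mathbb{Z}$ (one maximal, two intermediate, two minimal, since $A\cong B$), the simplices joining them, and in particular the Euclidean shapes assigned to the $2$-simplices; the point is that these triangles organize, around the maximal-type vertices, into one family of squares and four families of octagons glued along edges. On the $\Sigma_W$ side I recall that the spherical subsets of $S=\{x,y,z,w\}$ are $\varnothing$, the four singletons, and the five pairs $\{x,y\},\{x,z\},\{y,z\},\{x,w\},\{y,w\}$ --- no three-element subset is spherical, the relevant rank-$3$ parabolics being infinite (two of them affine of type $\widetilde{C}_2$) --- so $\Sigma_W$ is two-dimensional and tiled by the Coxeter cells of its rank-$2$ spherical parabolics: a square for $W_{\{x,y\}}$ since $m(x,y)=2$, and an octagon for each of the four parabolics with $m=4$. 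The Moussong metric makes these regular Euclidean polygons, and the Gromov link condition is exactly the one checked in \Cref{mainpositiveresult}: the nerve is $K_4$ with the edge $zw$ deleted, with edge lengths $\pi/2$ and $3\pi/4$, and its two empty triangles have length $2\pi$ while the unique empty $4$-cycle has length $3\pi$. These descriptions match cell for cell, upgrading the resemblance noted after \Cref{mainpositiveresult} to an isomorphism of piecewise-Euclidean complexes.

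Next, and this is the heart of the argument, I would make the identification $W$-equivariant by verifying that each of $x=L_a$, $y=R_b$, $z=\tau$, $w=\tau(\chi_t^b)^2$ acts on $L$ as the reflection attached to its namesake generator of $W$: an isometric involution (note $\tau^2=L_a^2=R_b^2=1$ and $w^2=1$) whose fixed set is a wall separating $L$, with the walls of $x$ and $y$ meeting at angle $\pi/2$, each of $x,y$ meeting each of $z,w$ at angle $\pi/4$, and the walls of $z$ and $w$ disjoint. Concretely I would fix a base maximal-type marked graph of groups $\mathbb{G}_0$, identify the panels of the corresponding Davis chamber with the codimension-one faces at $\mathbb{G}_0$ in $L$, and trace how $L_a,R_b,\tau,\tau(\chi_t^b)^2$ move the marked graphs of groups of each of the five types --- for instance $\tau$ reverses the loop carrying the $\mathbb{Z}$-factor. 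Once the generators are seen to act as the correct reflections with the correct crossing pattern, the map of chambers extends uniquely to the desired $W$-equivariant isometry $\Sigma_W\to L$ by the standard reconstruction of a Davis complex from a reflection chamber. I expect this step --- pinning down how these particular outer automorphisms interact with the wall structure of $L$ and with the geometry of the constructed metric --- to be the main obstacle; the rest is bookkeeping.

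Finally, for the index: since $[\out(F):\im\Phi]\le 4$, it suffices to check that the four cosets of $\im\Phi$ represented by $1,\sigma,\chi_t^b,\sigma\chi_t^b$ are pairwise distinct. Each of $L_a,R_b,\tau,\chi_t^b$ fixes the conjugacy classes $[A]$ and $[B]$, while $\sigma$ interchanges them, so the homomorphism $\out(F)\to\mathbb{Z}/2$ recording this interchange kills $\im\Phi$ but not $\sigma$; this separates $\{1,\chi_t^b\}$ from $\{\sigma,\sigma\chi_t^b\}$, and in particular shows $\sigma\chi_t^b\notin\im\Phi$. It remains to see $\chi_t^b\notin\im\Phi$: here one notes that $(\chi_t^b)^2=\tau^{-1}w=zw$ is an infinite-order element of $\im\Phi\cong W$ translating along a geodesic line of $\Sigma_W$, whereas $W$ contains no ``half-translation'' of that line --- the stabilizer of the line in $W$ is the infinite dihedral group $\langle z,w\rangle$, in which $zw$ is not a square. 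Equivalently, once $L\cong\Sigma_W$ and the faithfulness of the $\out(F)$-action are in hand, $\out(F)$ is sandwiched between $\im\Phi=W$ and $\aut(\Sigma_W)=W\rtimes(\mathbb{Z}/2)^2$ --- the extra factor being the two diagram automorphisms of the Coxeter diagram of $W$, which swap $x\leftrightarrow y$ and $z\leftrightarrow w$ --- and the computations above show $\out(F)$ surjects onto both, forcing the index to be exactly $4$.
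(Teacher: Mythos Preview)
Your overall strategy---build a $W$-equivariant isometry $\Sigma_W\to L$ and read off injectivity and the index---is the same as the paper's, and your combinatorial matching of the two complexes (the five spherical pairs in $S$, the square and four octagons, the link computation) is exactly what the paper does. The difference is at the crux: how do you know the equivariant map is a bijection, equivalently that $\Phi$ is injective? You propose to verify that $L_a,R_b,\tau,\tau(\chi_t^b)^2$ act on $L$ as genuine reflections with the correct crossing pattern and then invoke a ``standard reconstruction of a Davis complex from a reflection chamber.'' The paper instead argues directly: once one has the $\Phi$-equivariant cellular map $\Sigma_W\to L$ which is an isometry on each $2$-cell, take any loop in the $1$-skeleton of $L$; it spells a word in the images of the Coxeter generators which is trivial in $\out(F)$, and since $L$ is simply connected it bounds a van Kampen diagram tiled by squares and octagons---but each such cell is already a relator of $W$, so the word was already trivial in $W$. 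This simultaneously proves $\Phi$ injective and the map an isomorphism.

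Your route is viable but less self-contained: verifying the ``correct crossing pattern'' means checking that the walls of the four reflections, together with all their $\im\Phi$-translates, chamber $L$ with the right local structure, and the reconstruction theorem you allude to needs as input precisely that $L$ is simply connected and that the chamber is a strict fundamental domain for $\im\Phi$---which is close to what you are trying to prove. The paper's van Kampen argument packages this cleanly in two sentences. For the index, the paper gets it for free: once $L\cong\Sigma_W$, the group $W$ acts freely on the vertices of $\Sigma_W$, so $\im\Phi$ has exactly four cosets in $\out(F)$ (the stabilizer of a maximal vertex being $C_2\times C_2$). Your coset-separation argument also works---your observation that $zw$ is not a square in $W$ is easiest to see in the abelianization $W^{\mathrm{ab}}\cong(\mathbb Z/2)^4$, rather than via axis stabilizers---but it is more effort than necessary.
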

Since the Davis--Moussong complex for $W$ is a $2$-dimensional CAT(0) complex
with infinitely many ends,
\Cref{specialcase} implies a special case of \Cref{mainpositiveresult}.

In the particular case that $A = B = C_2$,
we will show that the complex $L$ has a very particular structure;
a piece of it is drawn in \Cref{mainfigure}.
This structure depends on two observations.

\paragraph{Intermediate graph of groups have two expansions.}
For the first,
notice that each intermediate marked graph of groups
has two expansions.
Indeed, each intermediate marked graph of groups
has a single vertex $v$ that has
either valence at least two (in fact equal) and $\mathcal{G}_v$ nontrivial
or valence at least $4$ (in fact equal) and $\mathcal{G}_v$ trivial.
It's not hard to see from \Cref{freeproductlemma}
that every ideal edge of $\mathcal{G}$ is therefore based at $v$.
In each case $D_v$ has cardinality four and each ideal edge has exactly two elements.
Up to the $\mathcal{G}_v = C_2$ action in the former case 
and complementation in the latter,
we may assume that a fixed direction $d \in D_v$
belongs to every ideal edge in $\mathcal{G}$;
there are three choices for the other direction
but in each case one is forbidden.

\paragraph{Stars of minimal graphs of groups are polygons.}
For the second,
we claim that for each minimal marked graph of groups $\tau$,
$\Star(\tau)$ in $L$ is the first barycentric subdivision of a polygon.
Indeed, this observation follows from the first:
if $\tau^\Phi$ is an expansion of $\tau$ which is maximal,
then $\Phi$ has two ideal edges
which can be collapsed independently
to produce two intermediate collapses of $\tau^\Phi$ which also collapse onto $\tau$.
Since intermediate marked graphs of groups have two expansions,
the observation follows.
We will think of the maximal marked graph of groups in $\Star(\tau)$
as the vertices of the polygon,
the intermediate marked graphs of groups as barycenters of edges,
and $\tau$ itself as the barycenter of the face.

Now consider the minimal marked graphs of groups again.
If the graph of groups $\mathcal{G}$ has an edge $e$ that forms a loop,
then every ideal edge in $\mathcal{G}$ is based at the vertex $v$ incident to that loop.
By the third condition in \Cref{freeproductlemma}
if $d$ is the other edge incident to $v$,
then every ideal edge based at $v$ contains, up to the action of $\mathcal{G}_v = C_2$,
the direction $(1,d)$.
There are four ideal edges of size two 
(If $\star$ denotes the nontrivial element of $C_2$,
any choice of direction besides $(\star,d)$ is allowed) 
and a further four of size three (fixing a choice of $(1,e)$ or $(\star,e)$,
we see that only the two directions 
with underlying oriented edge $\bar e$ may be chosen for the third).

If no edge forms a loop, then both vertices $v$ and $w$ of $\mathcal{G}$
support ideal edges.
Since in this case $v$ and $w$ have valence two,
there are as in the intermediate case only two choices of ideal edge based at $v$ or $w$
for a total of four.
In other words,
if $\tau$ is minimal,
$\Star(\tau)$ is either a quadrilateral or an octagon.
It is not hard to argue that $L$ is tiled by these squares and octagons,
which overlap in at most an edge.

\paragraph{$L$ is CAT(0).}
Give $L$ the piecewise-Euclidean metric in which each quadrilateral is a Euclidean unit square
and each octagon is a regular Euclidean octagon with unit side-length.
By \cite[Theorem 5.4]{TheBible},
since $L$ is contractible,
$L$ equipped with this metric is CAT(0) if and only if it satisfies Gromov's 
\emph{link condition} \cite[Definition 5.1]{TheBible}.
By construction, we need only check this at the vertices of each octagon and square.
Since there is one combinatorial type of maximal marked graph of groups,
there is one link to check.

Let $\tau$ be maximal.
A vertex of $\Link(\tau)$ is an edge of a polygon with vertex $\tau$;
or in other words,
a marked graph of groups obtained from $\tau$ by collapsing a single edge.
$\tau$ has two separating edges and two nonseparating edges.
The nonseparating edges cannot both be collapsed,
so these give two vertices of $\Link(\tau)$ that are not connected by an edge.
Every other pair of vertices of $\Link(\tau)$ can be collapsed simultaneously,
and $\Link(\tau)$ contains an edge of length $\frac{\pi}{2}$ if the
star of the collapsed graph of groups is a square
and length $\frac{3\pi}{4}$ if it is an octagon.
Collapsing both separating edges yields a square,
while collapsing a separating edge and a nonseparating edge yields an octagon.
The graph $\Link(\tau)$, pictured in \Cref{linkfigure} thus has the property 
that every injective loop in the graph 
has length at least $2\pi$, so by \cite[Lemma 5.6]{TheBible},
$L$ satisfies the link condition and is thus CAT(0).

\begin{figure}
    \begin{center}
	    \def\svgwidth{\columnwidth}
\begingroup%
  \makeatletter%
  \providecommand\color[2][]{%
    \errmessage{(Inkscape) Color is used for the text in Inkscape, but the package 'color.sty' is not loaded}%
    \renewcommand\color[2][]{}%
  }%
  \providecommand\transparent[1]{%
    \errmessage{(Inkscape) Transparency is used (non-zero) for the text in Inkscape, but the package 'transparent.sty' is not loaded}%
    \renewcommand\transparent[1]{}%
  }%
  \providecommand\rotatebox[2]{#2}%
  \newcommand*\fsize{\dimexpr\f@size pt\relax}%
  \newcommand*\lineheight[1]{\fontsize{\fsize}{#1\fsize}\selectfont}%
  \ifx\svgwidth\undefined%
    \setlength{\unitlength}{496.06299213bp}%
    \ifx\svgscale\undefined%
      \relax%
    \else%
      \setlength{\unitlength}{\unitlength * \real{\svgscale}}%
    \fi%
  \else%
    \setlength{\unitlength}{\svgwidth}%
  \fi%
  \global\let\svgwidth\undefined%
  \global\let\svgscale\undefined%
  \makeatother%
  \begin{picture}(1,0.33142857)%
    \lineheight{1}%
    \setlength\tabcolsep{0pt}%
    \put(0,0){\includegraphics[width=\unitlength,page=1]{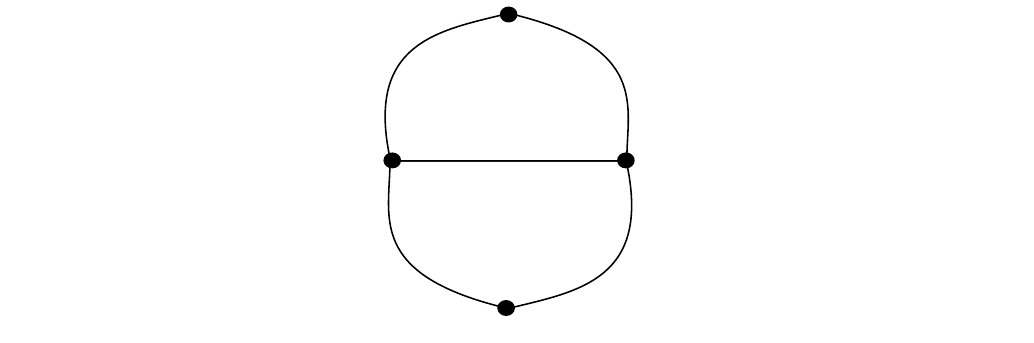}}%
    \put(0.61009853,0.24761317){\color[rgb]{0,0,0}\makebox(0,0)[lt]{\lineheight{1.25}\smash{\begin{tabular}[t]{l}$\frac{3\pi}{4}$\end{tabular}}}}%
    \put(0.38486093,0.24761317){\color[rgb]{0,0,0}\makebox(0,0)[lt]{\lineheight{1.25}\smash{\begin{tabular}[t]{l}$\frac{3\pi}{4}$\end{tabular}}}}%
    \put(0.38730918,0.10561554){\color[rgb]{0,0,0}\makebox(0,0)[lt]{\lineheight{1.25}\smash{\begin{tabular}[t]{l}$\frac{3\pi}{4}$\end{tabular}}}}%
    \put(0.6166272,0.10561554){\color[rgb]{0,0,0}\makebox(0,0)[lt]{\lineheight{1.25}\smash{\begin{tabular}[t]{l}$\frac{3\pi}{4}$\end{tabular}}}}%
    \put(0.47952606,0.18559121){\color[rgb]{0,0,0}\makebox(0,0)[lt]{\lineheight{1.25}\smash{\begin{tabular}[t]{l}$\frac{\pi}{2}$\end{tabular}}}}%
  \end{picture}%
\endgroup%

    \end{center}
    \caption{The link of a maximal marked graph of groups.}
    \label{linkfigure}
\end{figure}

\paragraph{Labelled graphs of groups.}
In \Cref{mainfigure},
we have depicted the marked graphs of groups representing vertices
and faces of polygons in $L$ by \emph{labelling} them.
Now, $G$ has the following presentation
\[
    G = \langle a, b, t \mid a^2 = b^2 = 1 \rangle.
\]
Call a triple in the $\aut(G)$-image of the set $\{a,b,t\}$ a \emph{basis} for $G$.
Under the marking, if $T$ is a maximal tree in $\tau$,
the fundamental group $\pi_1(\tau,T)$ may be identified with $G$ in the following way:
the vertex groups of $\tau$
are each generated by an order-two element of a basis for $G$,
and an edge outside of a maximal tree in $\tau$ may be identified with
the infinite-order element of that basis.
Recall that every marked graph of groups is obtained by collapsing edges
of a maximal marked graph of groups $\tau$,
which has four edges,
which we will color red, orange, pink and blue in the following way:
the pink and blue edges are nonseparating,
the red edge connects to the vertex group generated by a conjugate of $a$
and the orange edge to the vertex group generated by a conjugate of $b$.

Note that up to conjugating the basis as a whole,
we may always assume that the conjugate of $a$ chosen is actually $a$ itself.
We may furthermore assume that (up to the action of $\sigma$)
the automorphism taking our original basis to the given one
actually lies in the subgroup generated by the automorphisms
$\tau$, $L_a$, $R_b$ and $\chi_t^b$ from the introduction
since the outer classes of these automorphisms together with $\sigma$ generate $\out(G)$.

In fact, recalling the homomorphism $\Phi\colon W \to \out(G)$ from the introduction,
we claim that we may always choose the labelling automorphism
from the normal subgroup $\im(\Phi) < \out(G)$.
To see this, notice that 
the stabilizer of a maximal marked graph of groups in $L$ under the $\out(G)$-action
is isomorphic to $C_2\times C_2$
and is generated by an $\out(G)$-conjugate of the outer class of the automorphisms
$\tau\chi_t^b$ (this is the effect of swapping pink and blue edges)
and $\tau\sigma$ 
(this is the effect of ``reflecting'' the graph of groups through the midpoints of the pink and blue edges).
An arbitrary element of $\out(G)$ may be written as
\[
    w,\qquad(\chi_t^b)^{-1} w,\qquad \sigma w\qquad\text{or}\qquad (\chi_t^b)^{-1}\sigma w
\]
where $w \in \im(\Phi)$. (We have not proved that $\out(G)/\im(\Phi)$ is all of $C_2\times C_2$,
so we do not claim uniqueness here)
Since $\tau^2 = 1$, $(\chi_t^b)^{-1}\tau = \tau\chi_t^b$ and $\tau \in \im(\Phi)$,
we may introduce a pair of $\tau$ into the above equations
to see that an arbitrary labelling automorphism may be represented
by an element of the stabilizer of a fixed maximal marked graph of groups
followed by an element of $\im(\Phi)$.

\paragraph{The Davis--Moussong complex of $W$.}
The Davis--Moussong complex of $W$ is a CAT(0) polygonal complex tiled by squares and octagons.
It is built in the following way:
begin with a variant of the (typically right, but for our purposes \emph{left}) Cayley graph
of $W$ where rather than putting in a bigon between every pair of vertices that differ by a Coxeter generator,
we put a single edge.
Each Coxeter relation for $W$ determines a family of loops in the Cayley graph
of length four or eight;
fill in these loops with $2$-cells to form squares and octagons,
and give the Davis--Moussong complex the piecewise-Euclidean metric
that agrees with the metric we gave $L$ on each cell.

\paragraph{The action isomorphism.}
We turn to proving that $\Phi$ is injective
and that $L$ is isomorphic to the Davis--Moussong complex of $W$.
Observe that under $\Phi$, each of the four Coxeter generators of $W$
fixes the midpoint of a unique edge incident to the vertex represented by
our fixed maximal graph of groups.
This implies that there exists a $\Phi$-equivariant cellular map from the Davis--Moussong complex of $W$
to $L$ that is an isometry on each $2$-cell.
Put another way, we may think of vertices of $L$ as labelled by elements of $\im(\Phi)$,
and a loop in the $1$-skeleton of $L$ corresponds to a word in the generators of $\im(\Phi)$
(and thus $W$) that is trivial in $\out(G)$.
Since $L$ is simply connected,
there exists a van Kampen diagram for this word
whose interior is tiled by squares and octagons.
But each square and each octagon, which corresponds to a relator in $\im(\Phi)$,
is already a relator in $W$.
This proves that $\Phi$ is injective and the $\Phi$-equivariant cellular map
is in fact an equivariant isometry.
Since $W$ therefore acts freely on the vertices of $L$ via $\Phi$,
this also proves that $\im(\Phi)$ has index four in $\out(G)$.
This completes the proof of \Cref{specialcase}.

\begin{figure}
    \begin{center}
	    \def\svgwidth{\columnwidth}
	        \import{./figures/}{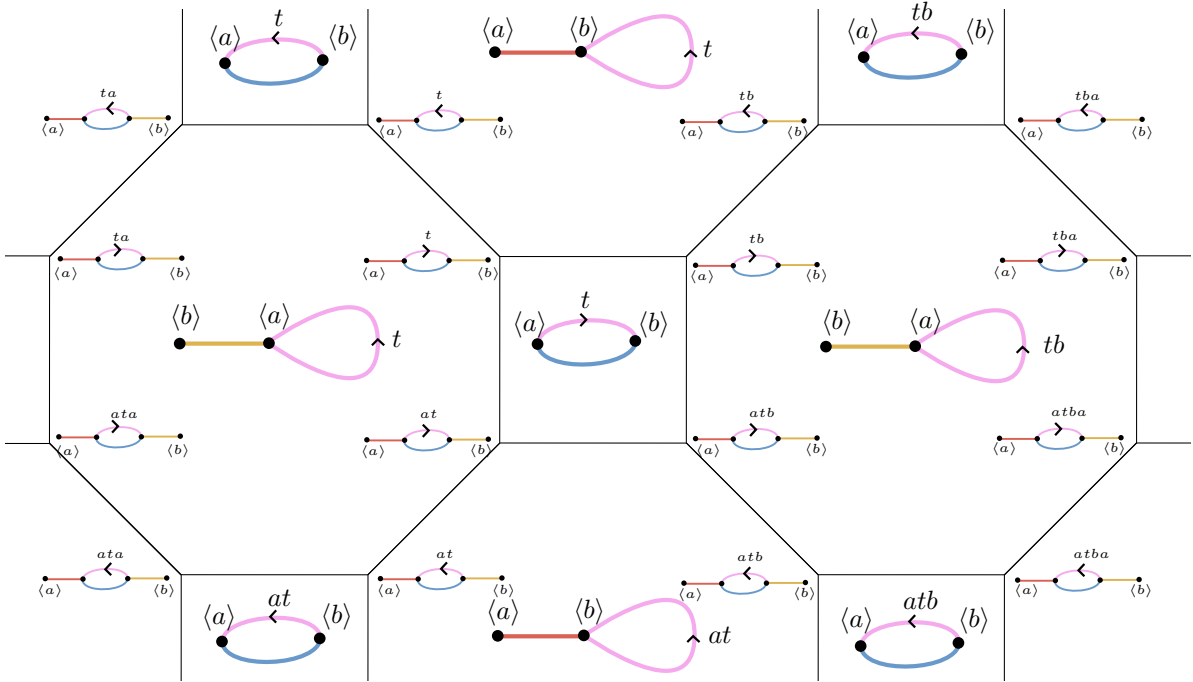}
	
    \end{center}
    \caption{A portion of the complex $L$.}
    \label{mainfigure}
\end{figure}

\subsection{The general case}\label{maintheoremsection}
The purpose of this subsection is to complete the proof of \Cref{mainpositiveresult};
the entire section is devoted to its proof.

Recall from \Cref{typesfigure} that there are seven combinatorial types of marked graphs of groups.
In the general case it is not true that stars of minimal marked graphs of groups are polygons,
but they may be, in a certain sense, thought of as being made by gluing a family 
of barycentrically subdivided squares
or a family of barycentrically subdivided octagons together.
We proceed through the combinatorial types.

\paragraph{Maximal links.}
If $\tau$ is maximal with respect to expansion,
then any edge of $\tau$ may be collapsed,
and a pair of edges may be collapsed if and only if they are not both nonseparating.
Thus the combinatorial type of $\Link(\tau)$ is the same as in the special case $A = B = C_2$.
The piecewise-Euclidean metric we give $L$ will make $\Link(\tau)$ isometric to the special case,
so these links will satisfy Gromov's link condition.

\paragraph{Intermediate links.}
There are three types of intermediate marked graphs of groups.
The type which has one loop edge has the same link as in the special case,
(a topological circle given the combinatorial structure of a complete bipartite graph on $2+2$ vertices)
because every ideal edge is based at a vertex with trivial vertex group, just as in the special case.
Also as in the special case, we will treat
vertices corresponding to this type of marked graph of groups as being the barycenter of an edge,
so each edge in $\Link(\tau)$ in this case will have length $\frac{\pi}{2}$;
these links thus satisfy Gromov's link condition.

In the two other cases, every ideal edge is based at a vertex with valence two
and vertex group $A$ or $B$.
A simple calculation shows that there are thus $|A|$ or $|B|$ ideal edges based at this vertex, respectively.
Every edge of these graphs of groups may be collapsed (individually),
so it follows from \Cref{understandinglinks}
and our understanding of links in $L$
that $\Star(\tau)$ is the cone on a complete bipartite graph with $3 + |A|$ or $3 + |B|$ vertices respectively.
The piecewise-Euclidean metric we give $L$ will make each edge in $\Link(\tau)$ have length $\frac{\pi}{2}$;
these links thus satisfy Gromov's link condition.

\paragraph{Minimal links.}
There are three types of minimal marked graphs of groups.
Consider the first the type with two nonseparating edges.
Every ideal forest in such a marked graph of groups $\tau$ has one ideal edge based at
a vertex with vertex group $A$ and one at the other vertex;
every such pair forms an ideal forest, so $\Link(\tau)$ is a complete bipartite graph with $|A| + |B|$ vertices.
The piecewise-Euclidean metric we give $L$ will make each edge in $\Link(\tau)$ have length $\frac{\pi}{2}$;
these links thus satisfy Gromov's link condition.

In the final two cases, every ideal edge is based at a vertex with vertex group $A$ or $B$.
In each case, a similar argument as in the previous case shows that there are $2|A|$ or $2|B|$
ideal edges of size two and $|A|^2$ or $|B|^2$ ideal edges of size three respectively.
A pair of ideal edges forms an ideal forest if and only if one is contained in the other,
so each ideal edge of size two is compatible with $|A|$ or $|B|$ ideal edges of size three respectively,
while each ideal edge of size three is compatible with two ideal edges of size two.
Explicitly, label the oriented edges incident to the relevant vertex $v$ as
$d$, $e$ and $\bar e$.
Up to the $\mathcal{G}_v$-action we may assume that $(1,d)$ is contained in every ideal edge.
The ideal edges of size two are thus in bijection with the set
$\mathcal{G}_v \times \{e\} \sqcup \mathcal{G}_v\times \{\bar e\}$
and the ideal edges of size three are in bijection with
$(\mathcal{G}_v \times\{e\})\times (\mathcal{G}_v \times \{\bar e\})$.
An injective loop in $\Link(\tau)$ of minimal length thus visits 
a sequence of size-three ideal edges of the form
$((g,e),(g',\bar e))$, $((h,e),(g',\bar e))$, $((h,e),(h',\bar e))$, $((g,e),(h',\bar e))$
and thus has length sixteen (four size-three ideal edges, four size-two, and eight unions of two).
The piecewise-Euclidean metric we give $L$ will make each edge in $\Link(\tau)$
have length $\frac{\pi}{8}$;
these links thus satisfy Gromov's link condition.

\paragraph{$L$ is CAT(0).}
Consider the pieces in \Cref{piecesfig}.
By the preceding discussion, we see that $L$ is tiled by cells isomorphic to these pieces
in the sense that every $2$-simplex of $L$ is contained in a unique such piece.
The piece with five edges contains four $2$-simplices of $L$;
the piece with four contains two.
Give $L$ the piecewise-Euclidean metric in which the piece with five edges
is a quarter of a regular octagon of side-length one
and the piece with four edges is a quarter of a square of side-length one.
By the preceding discussion we see that with this metric, $L$ satisfies Gromov's link condition
and is thus CAT(0).

\begin{figure}
    \begin{center}
	    \def\svgwidth{\columnwidth}
	        \import{./figures/}{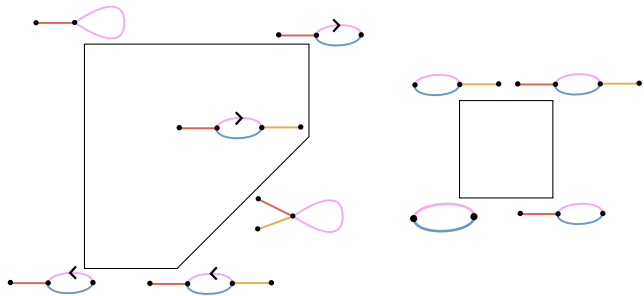}
	
    \end{center}
    \caption{The pieces the complex $L$ is tiled by.}
    \label{piecesfig}
\end{figure}

\paragraph{$L$ has infinitely many ends.}
Just as the Davis--Moussong complex of $W$ had infinitely many ends,
so too does $L$ for general $A$ and $B$.
To see this, consider $\Star(\tau)$,
where $\tau$ is a minimal marked graph of groups with two nonseparating edges.
We claim that $\Star(\tau)$ separates $L$ into at least two components with noncompact closure;
since $\out(F)$ is not virtually cyclic,
(we proved in the introduction that it contains
$\mathbb{Z} \oplus \mathbb{Z}$)
this proves that $L$ has infinitely many ends.
To see that $\Star(\tau)$ separates,
consider the three types of vertices on the boundary of $\Star(\tau)$.
One has link given in \Cref{linkfigure},
in which the subset of that link contained in $\Star(\tau)$
is the closed edge of length $\frac{\pi}{2}$,
and the other two types have links that are complete bipartite on $3 + |A|$ or $3 + |B|$ vertices,
in which the subset of that link contained in $\Star(\tau)$
is the $1$-neighborhood of one vertex from the group of $3$
(thus it contains either $|A| + 1$ or $|B| + 1$ vertices).
In either case, we see that the subset of the link contained in $\Star(\tau)$
separates the link of the corresponding vertex into two pieces:
explicitly in the coloring scheme from \Cref{specialcase},
marked graphs of groups in the link without a pink edge are separated from those without a blue edge.
It follows that $\Star(\tau)$ separates a small tubular neighborhood of $\Star(\tau)$
into two pieces.
Because $L$ is CAT(0) and thus uniquely geodesic,
it follows that $\Star(\tau)$ separates $L$ into two pieces.
These pieces have noncompact closure:
indeed, each contains a Euclidean plane tiled by squares and octagons minus one square.
(This plane is in fact stabilized by a $\mathbb{Z} \oplus \mathbb{Z}$ subgroup of $\out(G)$.)

Let us conclude by remarking that while the Davis--Moussong complex for $W$ has isolated flats,
the same does not appear to be true for $L$ in general.
Indeed, one can build Euclidean planes tiled by squares and octagons
that have unbounded intersection.
(Let us remark that in \cite{MyOneEnded}, we have described algebraically that $\out(\mathbb{G})$
is virtually a free product of two direct products of free groups;
in general these free groups are not infinite cyclic, and we have an algebraic obstruction
to $L$ having isolated flats.)

\section{The proof of \texorpdfstring{\Cref{mainnegativeresult}}{Theorem B}}\label{mainnegativesection}
The purpose of this section is to prove \Cref{mainnegativeresult}.
Because trees (contractible simplicial complexes of dimension one)
admit piecewise-Euclidean CAT$(0)$ metrics
and in view of \Cref{mainpositiveresult},
it suffices to show that when $L(G,\mathscr{A})$
does not admit an $\out(G,\mathscr{A})$-equivariant piecewise-Euclidean or piecewise-hyperbolic CAT$(0)$ metric
when it has dimension at least two and $L(G,\mathscr{A})$ is not the deformation space
of a free product decomposition of the form $G = A_1 * A_2 * \mathbb{Z}$.
Notice that \Cref{mainpositiveresult} is for $A_1$ and $A_2$ nontrivial \emph{finite} groups.
This is necessary only for $\out(G,\mathscr{A})$ to act geometrically on $L(G,\mathscr{A})$
equipped with the CAT$(0)$ metric we produced in \Cref{mainpositivesection}; although the complex will no longer be locally finite
if either $A_1$ or $A_2$ is an infinite group,
the metric cell structure produced in that section remains CAT$(0)$.

It is useful to recall precisely what we mean by ``piecewise-Euclidean'' or ``piecewise-hyperbolic''.
The complex $L(G,\mathscr{A})$ comes equipped with a simplicial structure.
We mean to assign, in an $\out(G,\mathscr{A})$-equivariant way,
a Euclidean or hyperbolic metric to each simplex
so that its faces are totally geodesic (i.e.\ convex),
glue these metric simplices by isometries of their faces,
and give the entire space the path metric.
Because $\out(G,\mathscr{A})$ acts cocompactly,
any such metric will have finitely many isometry types of simplices,
and thus will be CAT$(0)$ if it satisfies Gromov's
\emph{link condition}~\cite{TheBible},
which stipulates that links of simplices
equipped with the natural \emph{angle metric},
are CAT$(1)$ simplicial complexes.

In a CAT$(1)$ metric space, every geodesic loop must have length at least $2\pi$.
In particular, if the metric space is a metric \emph{graph,}
every edge path loop must have length at least $2\pi$,
while to be piecewise-Euclidean each $2$-simplex must have angle sum $\pi$.
Hyperbolic geometry places restrictions on the \emph{lengths}
of edges of geodesic triangles based on their angle sum,
but each hyperbolic triangle must have angle sum strictly less than $\pi$.
Our proof of \Cref{mainnegativeresult} puts these two conditions at odds with each other.

Explicitly, under our assumptions,
we find $2$-simplices in $L(G,\mathscr{A})$, two of whose angles must have size at least $\frac{\pi}{2}$
in any piecewise-Euclidean or piecewise-hyperbolic metric, a contradiction.
The $2$-simplices in question are pictured in \Cref{badtri1,badtri2,badtri3}.
When $L(G,\mathscr{A})$ has dimension at least two but is not a free product decomposition of the form
$G = A_1 * A_2 * \mathbb{Z}$, there are three base cases according to the rank of the free part:
if that rank is zero, the dimension of $L$ is $n - 2$, so we must have $n \ge 4$.
If the rank is one, the dimension of $L$ is $n$, but the case $n = 2$ is $A_1 * A_2 * \mathbb{Z}$,
so we have $n \ge 3$.
If the rank, $k$, is at least two, the dimension is either $2k + n - 2$ or $2k + n - 3$
according to the size of $n$, so we see that either $n \ge 1$ or $k \ge 3$.
(The case $2k + n - 2$ happens when $n \ge 2$.
In the particular case when $n \le 1$, the dimension is instead $2k + n - 3$.
The reason for the discrepancy is that the existence of possible segment shelters when $n \ge 2$
allows for graphs of groups in $L(G, \mathscr{A})$ with more edges.)
In view of Bridson's result~\cite{Bridson}, we may and shall assume that $n \ge 1$.
This is why there are three figures; each one illustrates one of these cases.

Let us explain further: each $2$-simplex (triangle) in the figure has two angles marked;
we will argue that these angles must both be at least $\frac{\pi}{2}$ in any piecewise-Euclidean
or piecewise-hyperbolic metric, a contradiction.
Each vertex of the $2$-simplex, being a vertex of $L$, corresponds to a marked graph of groups.
That marked graph of groups is suggested in the figure. Each solid star vertex of the graph of groups
(of which there are four in the first figure, three in the second, and none in the third)
\emph{must} correspond to a vertex with nontrivial vertex group.
(A more careful analysis of the complex as in~\cite{Bridson} allows one to argue that the
spine of Outer Space for $F_n$ with $n \ge 3$ is not CAT$(0)$; but we make no attempt to reproduce
Bridson's result here.)
Similarly, each unstarred vertex \emph{must} have trivial vertex group.
The ellipsis in each figure indicates a point where one \emph{may} attach a reduced marked graph of groups
in order to obtain a marked graph of groups representing a vertex of $L(G,\mathscr{A})$
when $(G,\mathscr{A})$ is more complicated than the base case.

\begin{figure}
  \begin{center}
    \def\svgwidth{0.6\columnwidth}
    \import{./figures/}{badtriangles1.pdf_tex}
  \end{center}
  \caption{A $2$-simplex in $L$, two of whose angles must but cannot be at least $\frac{\pi}{2}$.}\label{badtri1}
\end{figure}

\begin{figure}[t]
  \begin{center}
    \def\svgwidth{0.6\columnwidth}
    \import{./figures/}{badtriangles2.pdf_tex}
  \end{center}
  \caption{A $2$-simplex in $L$, two of whose angles must but cannot be at least $\frac{\pi}{2}$.}\label{badtri2}
\end{figure}

\begin{figure}
  \begin{center}
    \def\svgwidth{0.6\columnwidth}
    \import{./figures/}{}
  \end{center}
  \caption{A $2$-simplex in $L$, two of whose angles must but cannot be at least $\frac{\pi}{2}$.}\label{badtri3}
\end{figure}

Thus in each figure, the topmost vertex of the $2$-simplex corresponds to a reduced marked graph of groups.
The other two marked graphs of groups are obtained from it by blowing up an ideal forest
comprising two ideal edges, each of which has ``size two'', in the sense that,
writing the ideal edge as $(\alpha, \mathcal{G}_\alpha)$,
the set $\alpha$ may be taken
(up to complementation in the case that the unfilled starred vertex has trivial vertex group)
to contain two directions.
The two ideal edges are \emph{disjoint:} one, the ideal edge which becomes the gold edge
in the blown up marked graph of groups to the left in each figure,
contains a direction in the orbit of (one orientation of) the pink edge
and one orientation of the purple edge,
while the other ideal edge, which becomes the red edge in figure,
contains a direction in the orbit of the teal edge and the other orientation of the purple edge
in the first two figures, and the other orientation of both the purple and pink edges in the third.

We now turn to showing that in each figure, the indicated angles must
be at least $\frac{\pi}{2}$.
First consider the angle at the minimal marked graph of groups, at the top of each figure.
That angle, in the link of the minimal marked graph of groups,
connects the red ideal edge to the gold ideal edge.
However, because of the presence of the starred vertex where we are blowing up,
these ideal edges are actually \emph{underspecified.}
Because the vertex group is nontrivial, there are at least \emph{two}
choices of ``gold'' or ``red'' ideal edge; in fact there is one for each element of the
corresponding vertex group.
Blowing up two distinct ``gold'' or ``red'' ideal edges
yield distinct marked graphs of groups which differ by an element of $\out(G,\mathscr{A})$.
What's more, there is in fact an element of $\out(G,\mathscr{A})$
taking each \emph{pair} of chosen gold and red edges to any other pair.
Each ``red'' ideal edge is compatible with \emph{each} ``gold'' ideal edge,
so in the link there is a $4$-cycle containing the edge
from our chosen red ideal edge to our chosen gold ideal edge.
Because $\out(G,\mathscr{A})$ acts transitively on the set of pairs of red and gold ideal edges,
in the link, each of the angles must be equal.
Because they form a $4$-cycle, we see that all of these angles must be at least $\frac{\pi}{2}$
in any piecewise-Euclidean or piecewise-hyperbolic metric on $L(G,\mathscr{A})$.

Now consider the other marked angle, at the bottom of each figure.
Because the marked graph of groups is intermediary in the $2$-simplex,
one of the other marked graphs of groups is obtained by blowing up an ideal forest
(the ``red'' ideal edge), while the other is obtained by collapsing a collapsible forest
(the ``gold'' edge). In fact, there are \emph{three} collapsible edges in each figure;
any edge incident to the unstarred vertex.
In fact, the ``downward'' link of this vertex is comprised of these three edges,
each of which is by itself a maximal collapsible forest.
In any of these collapses, it is still possible to blow up the ``red'' ideal edge,
in fact \emph{any} choice of ``red'' ideal edge,
so the link of this marked graph of groups contains an embedded bipartite graph:
one half comprises the red ideal edges
and the other the collapsible edges.
Like all bipartite graphs, this graph contains many $4$-cycles.
Again by $\out(G,\mathscr{A})$-action, the angle length assigned to each of these edges
depends only (at most) upon the collapsible edge and not the ideal one.
Since in each figure at least two of the collapsible edges return us to a marked graph of groups
which is combinatorially identical to the minimal marked graph of groups in the figure
(with the possible exception of the vertex groups),
since the sum $2\theta_1 + 2\theta_2$ of angles around this $4$-cycle,
must be at least $2\pi$ in any piecewise-Euclidean or piecewise-hyperbolic metric on $L(G,\mathscr{A})$,
at least one of $\theta_1$ or $\theta_2$ must be at least $\frac{\pi}{2}$.
We may assume that our pictured $2$-simplex contains that angle.

This completes the proof; in any piecewise-Euclidean or piecewise-hyperbolic CAT$(0)$ metric on
$L(G,\mathscr{A})$, the pictured $2$-simplex must both have angle sum at most $2\pi$
but because two of its angles are at least $\pi$ and the third must be nonzero,
we have a contradiction.
Therefore no $\out(G,\mathscr{A})$-equivariant such metric exists.

\paragraph{Acknowledgments.}
The author is pleased to thank Kim Ruane and Lee Mosher for their interest and mentorship.
Additionally, he is grateful to the anonymous referee for giving this paper a careful reading, as well as for prompting him to flesh out the initial version of the paper.
This material is based upon work supported by the National Science Foundation under Award No. DMS-2202942

\bibliographystyle{alpha}
\bibliography{bib.bib}
\end{document}